\newtheorem{theorem}{Theorem}[section]
\newtheorem{lemma}[theorem]{Lemma}
\newtheorem{corollary}[theorem]{Corollary}
\newtheorem{definition}[theorem]{Definition}
\begin{document}
\begin{frontmatter}
\title{The Generalized Fractional Benjamin-Bona-Mahony Equation: Analytical and Numerical  Results}
\author[itu]{Goksu Oruc \corref{cor1}}
\ead{topkarci@itu.edu.tr}
\author[ozu]{Handan Borluk}
\ead{handan.borluk@ozyegin.edu.tr}
\author[itu]{Gulcin M. Muslu  }
\cortext[cor1]{Corresponding author}
\ead{gulcin@itu.edu.tr}

\address[itu]{Istanbul Technical University, Department of Mathematics, Maslak,
         Istanbul,  Turkey.}
 \address[ozu]{Ozyegin University, Department of Natural and Mathematical Sciences, Cekmekoy, Istanbul,  Turkey.}

\begin{abstract}
The generalized fractional Benjamin-Bona-Mahony (gfBBM) equation \mbox{models} the propagation of small amplitude long unidirectional waves in a nonlocally and nonlinearly elastic medium. The equation involves two fractional terms unlike the well-known fBBM equation. In this paper, we prove  local existence and uniqueness of the solutions for the Cauchy problem by using energy method. The sufficient conditions for the existence of solitary wave solutions are obtained. The Petviashvili method is proposed for the generation of the solitary wave solutions and their evolution in time is investigated numerically by Fourier spectral method. The efficiency of the numerical methods is tested and the relation between nonlinearity and fractional dispersion is observed by various numerical experiments.
\end{abstract}

\begin{keyword}
Generalized Fractional Benjamin-Bona-Mahony equation, Conserved Quantities, Local Existence, Solitary Waves, Petviashvili Method
\end{keyword}

\end{frontmatter}
\renewcommand{\theequation}{\arabic{section}.\arabic{equation}}
\setcounter{equation}{0}
\section{Introduction}
This paper is concerned with the generalized fractional Benjamin-Bona-Mahony (gfBBM) equation
\begin{equation} \label{gfBBM}
   u_t+  u_x + \frac{1}{2}(u^{p+1})_x+   \frac{3}{4}D^{\alpha} u_{x}+ \frac{5}{4}D^{\alpha} u_{t}=0,
  \end{equation}
which  models the propagation of small amplitude long unidirectional waves in a nonlocally and nonlinearly elastic medium. The equation is derived in \cite{erbayelastic,hae} as a special case of the generalized fractional Camassa-Holm equation by using an asymptotic expansion technique. 
Here $p$ is a positive integer and the operator $D^{\alpha} =(- \Delta)^{\frac{\alpha}{2}}$ denotes the Riesz potential of order $-\alpha$, for any $\alpha \in \mathbb{R}$.
The operator 
can be  defined via Fourier transform by
\begin{equation*}
  \widehat{D^{\alpha}q}(\xi)=|\xi|^{\alpha}\hat{q}(\xi),
\end{equation*}
where $\hat{q}$ is the Fourier transform of a function $q$.

The effects of the relation between the nonlinearity and the dispersion on the dynamics of solutions has been the focus of many studies. The problem mostly handled  by fixing the dispersion and increasing the nonlinearity. Studies for the more physically significant case with lower dispersion has
become popular only in the recent years. The well-known Korteweg-de Vries (KdV)  and Benjamin-Bona-Mohany (BBM) equations are investigated by using corresponding fractional forms such as
\begin{equation*} \label{fkdv}
  u_t+  u_x + u^p u_x+ D^{\alpha}u_x=0
\end{equation*}
fractional KdV (fKdV) equation and
\begin{equation*} \label{fbbm}
  u_t+  u_x + u^pu_x+ D^{\alpha}u_t=0
\end{equation*}
fractional BBM (fBBM) equation. These equations have been intensively studied in the past years for $\alpha \geq 1$ in terms of global well-posedness, stability and blow-up etc. We refer \cite{fonseca, albert, pava} for a more detailed discussion and review. The research on the more delicate case $\alpha\in (0, \,1)$  has been increased in the last few years.
For $p=1$, Linares et. al. \cite{linares} proved the local well-posedness for the
Cauchy problem  and they have also investigated the solitary wave solutions in terms of existence and stability in \cite{linares2015}. The stability and linear instability results for a general nonlinearity are obtained by Pava \cite{pava}. In \cite{kleinfBBM}  the blow-up and the global existence problems are handled and  solitary wave solutions for the fKdV are  constructed numerically.  Duran used efficient numerical methods to investigate the solitary wave solution of the  fKdV equation in \cite{aduran}.

The gfBBM equation contains the fractional terms of both  the fKdV and the fBBM equations, but unlike them, the gfBBM equation models a physical phenomena. The effects of these terms on the  solutions when occurring together, such as well-posedness of the Cauchy problem, existence of solitary waves and the nature of solutions in time is therefore a curious problem. The aim of the current study is to investigate the dynamics of the gfBBM equation with a general power type nonlinear term.

The paper is organized as follows: In Section 2, we prove the local existence and uniqueness of the Cauchy problem for the gfBBM equation together with the initial condition
\begin{equation}
u(x,0)=\phi (x) \label{incon}
\end{equation}
by using the energy method. In Section 3,   we derive the conserved quantities for the gfBBM equation.
The existence-nonexistence results for solitary wave solutions are given in Section 4. We use Pohozaev type identites to show the non-existence of solitary wave solutions and the results of \cite{franklenzmann} are applied for the existence of positive solitary waves for certain values of  $\alpha,~p$ and the wave speed $c$.
Section 5 is devoted to numerical investigation of the solutions.  We  construct the  solitary wave solutions numerically by using Petviashvili method. For the time evolution of the constructed solution we  propose a numerical scheme combining a
Fourier pseudo-spectral method for space and a fourth order Runge-Kutta method for the time integration. We perform numerical experiments for several values of $\alpha$ and $p$ to investigate the effects of dispersion and nonlinearity.

\noindent
 Throughout this study, $L^p(\mathbb{R})$ is the usual Lebesgue space with the norm $\parallel . \parallel_{L^p}$ for $1\leq p \leq \infty$. $H^s(\mathbb{R})$ is the Sobolev space with the norm
\begin{equation*}
  \| u \|_{H^{s}} = (\int_{\mathbb{{R}}} (1+|k|^2)^s |\hat{u}(k)|^2 dk )^{1/2}
\end{equation*}
for $s \in \mathbb{{R}}$ and  $C$ denotes the generic constant. Here, the Fourier transform and its inverse for the given function $u\in L^2(\mathbb{R})$    defined as
\begin{equation}
u(x)=\frac{1}{2\pi} \int_{\mathbb{{R}}}\hat{u} (k)e^{ik x} dk, \hspace*{30pt} {\hat{u}(k)}=\int_{\mathbb{{R}}} u(x) e^{-ik x} dx.
\end{equation}
\noindent
$J^s=(I-\Delta)^{\frac{s}{2}}$ denotes the Bessel potentials of order $-s$ with
\mbox{$\| J^s u \|_{L^{2}}=   \| u \|_{H^{s}}$}.

\setcounter{equation}{0}
\section{Local Existence and Uniqueness}
\setcounter{equation}{0}

The local well-posedness of solutions in Sobolev spaces  for the fractional Benjamin-Bona-Mahony equation with  the quadratic nonlinearity
\begin{equation*} \label{fbbm}
  u_t+  u_x + u u_x+ D^{\alpha}u_t=0
\end{equation*}
is studied by using energy estimates in \cite{linares}. As in stated \cite{he}, this method does not provide the uniqueness since one of the terms can not be controlled by the appropriate Sobolev norm.
The same problem is also observed for the gfBBM equation. Therefore, we follow the idea given in \cite{he}. To prove the local existence and uniqueness of the Cauchy problem, we consider the following regularization
 \begin{eqnarray}
 && u_t^{\epsilon} + u_x^{\epsilon} + \frac{1}{2} [(u^{\epsilon})^{p+1}]_x + \frac{3}{4} D ^{\alpha} u_x^{\epsilon}+ \frac{5}{4} D ^{\alpha}u_t^{\epsilon} - \epsilon u^{\epsilon}_{xx} =0,  \label{reg1}  \\
 &&u^\epsilon (x,0)=u_0^\epsilon(x).  \label{regin}
\end{eqnarray}
The eq.  \eqref{reg1} is rewritten as
\begin{equation}
   u_t^{\epsilon} +  \big(\frac{I+ \displaystyle\frac{3}{4}D^{\alpha}}{I+ \displaystyle\frac{5}{4}D^{\alpha}} \big)  u_x^{\epsilon}  -  \big( \frac{1}{I+ \displaystyle\frac{5}{4}D^{\alpha}} \big)  \epsilon u^{\epsilon}_{xx}  =  - \frac{1}{2 (I+ \displaystyle\frac{5}{4}D^{\alpha})} [(u^{\epsilon})^{p+1}]_x.
\end{equation}
The Duhamel formula implies that $u^\epsilon$  is the solution of the Cauchy problem \eqref{reg1}-\eqref{regin} if and only if $u^\epsilon$ is the solution of the integral equation $u^\epsilon=\Phi^{\epsilon}u^\epsilon $  where

\begin{equation}\label{reg2}
  (\Phi^{\epsilon} u^{\epsilon})(x,t) = S_t u_0^\epsilon(x) - \frac{1}{2} \int_{0}^{t} S_{t-\tau} \big[ \frac{\partial_x}{(I+\frac{5}{4}D^{\alpha})} (u^{\epsilon})^{p+1} \big] (x,\tau) d\tau,
\end{equation}
with
\begin{equation*}
  S_t u= \mathcal{F}^{-1}\bigg( e^{-\big[\frac{1+ \frac{3}{4}|\xi|^{\alpha}}{1+ \frac{5}{4}|\xi|^{\alpha}} i \xi + \frac{\epsilon \xi^2}{1+ \frac{5}{4}|\xi|^{\alpha}} \big ] t} \bigg)* u(x).
\end{equation*}
Here, the symbol * denotes the convolution operation. We need the following lemmas in order to proceed with the fixed point theorem:

\begin{lemma}\label{lemmareg1}
  Let $0<\alpha<1$ and $r\geq 0$. We have
  \begin{equation}\label{lemmaeq1}
    \| \frac{\partial_x}{(I+\displaystyle\frac{5}{4}D^{\alpha})} S_t(uv) \|_{H^r}
     \leq C(\epsilon,t) \| u\|_{H^r} \|v\|_{H^r}
  \end{equation}
  where
  \begin{equation}
S_t u= \mathcal{F}^{-1}\bigg( e^{-\big[ \frac{1+ \frac{3}{4}|\xi|^{\alpha}}{1+ \frac{5}{4}|\xi|^{\alpha}} i \xi + \frac{\epsilon \xi^2}{1+ \frac{5}{4}|\xi|^{\alpha}} \big ] t} \bigg)* u(x), \hspace*{20pt}
C(\epsilon,t)= C (\epsilon t)^{- \frac{\frac{3}{2}- \alpha}{2-\alpha}}. \label{oper}
\end{equation}

\end{lemma}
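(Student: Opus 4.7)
The plan is a direct Fourier multiplier calculation. The operator on the left acts on the Fourier side as multiplication by the symbol
$$m(\xi) = \frac{i\xi}{1+\frac{5}{4}|\xi|^\alpha}\,\exp\!\left[-\!\left(\frac{1+\frac{3}{4}|\xi|^\alpha}{1+\frac{5}{4}|\xi|^\alpha}i\xi + \frac{\epsilon\xi^2}{1+\frac{5}{4}|\xi|^\alpha}\right)\!t\right],$$
whose modulus reduces to $|m(\xi)| = \frac{|\xi|}{1+\frac{5}{4}|\xi|^\alpha}\exp\bigl(-\epsilon\xi^2 t/(1+\frac{5}{4}|\xi|^\alpha)\bigr)$ because the purely imaginary part of the exponent has unit modulus. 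Plancherel then gives
$$\Bigl\|\frac{\partial_x}{I+\frac{5}{4}D^\alpha}S_t(uv)\Bigr\|_{H^r}^2 = \frac{1}{2\pi}\int_{\mathbb R}(1+\xi^2)^r|m(\xi)|^2|\widehat{uv}(\xi)|^2\,d\xi.$$

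The observation that makes the inequality work for every $r\geq 0$, rather than only $r>\tfrac{1}{2}$ as the $H^r$ algebra property would require, is that $\widehat{uv}$ admits a pointwise bound. Writing $\widehat{uv} = (2\pi)^{-1}\hat u\ast\hat v$ and applying Cauchy--Schwarz (equivalently, Young's convolution inequality) yields
$$|\widehat{uv}(\xi)| \leq \tfrac{1}{2\pi}\|\hat u\|_{L^2}\|\hat v\|_{L^2} \leq C\|u\|_{L^2}\|v\|_{L^2} \leq C\|u\|_{H^r}\|v\|_{H^r},$$
uniformly in $\xi$. Substituting and pulling this bound outside the integral reduces the lemma to the scalar estimate
$$\int_{\mathbb R}(1+\xi^2)^r|m(\xi)|^2\,d\xi \leq C(\epsilon t)^{-(3-2\alpha)/(2-\alpha)},$$
whose square root produces the exponent $-(3/2-\alpha)/(2-\alpha)$ in $C(\epsilon,t)$. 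Using the $L^2$-norm of the symbol rather than its $L^\infty$-norm is exactly what supplies the extra half-power of $\epsilon t$ compared with the operator-norm scaling $(\epsilon t)^{-(1-\alpha)/(2-\alpha)}$.

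To bound the symbol integral I would split $\mathbb R$ into $|\xi|\leq 1$ and $|\xi|\geq 1$. The low-frequency piece is controlled by a constant since $|m(\xi)|\leq|\xi|$ there. On $|\xi|\geq 1$ one has $1+\frac{5}{4}|\xi|^\alpha \geq \frac{5}{4}|\xi|^\alpha$, so $|m(\xi)|^2 \leq C|\xi|^{2-2\alpha}\exp\bigl(-c\epsilon t|\xi|^{2-\alpha}\bigr)$ with $c>0$ depending only on $\alpha$. The polynomial weight $(1+\xi^2)^r$ is absorbed against a fraction of this exponential at the cost of enlarging the generic constant, and the substitution $y=c\epsilon t|\xi|^{2-\alpha}$ converts the remaining tail into a standard Gamma-type integral; the Jacobian contributes $(\epsilon t)^{-1/(2-\alpha)}$, the prefactor $|\xi|^{2-2\alpha}$ contributes $(\epsilon t)^{-(2-2\alpha)/(2-\alpha)}$, and the residual $y$-integral is finite, producing the scaling $(\epsilon t)^{-(3-2\alpha)/(2-\alpha)}$.

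The main technical step is this final change of variables, together with the verification that the polynomial weight $(1+\xi^2)^r$ really can be swallowed by the Gaussian-like decay produced by the parabolic regularization $-\epsilon u_{xx}^\epsilon$ without spoiling the stated $\epsilon t$ scaling. This is precisely the point at which the artificial viscosity $\epsilon$ is indispensable, and it explains both the structural form of $C(\epsilon,t)$ and its blow-up as $\epsilon t\to 0^+$.
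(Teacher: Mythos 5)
There is a genuine gap at the step where you claim the scalar estimate $\int_{\mathbb R}(1+\xi^2)^r|m(\xi)|^2\,d\xi \leq C(\epsilon t)^{-(3-2\alpha)/(2-\alpha)}$ with $C$ a generic constant. Having bounded $|\widehat{uv}(\xi)|$ uniformly by $\|u\|_{L^2}\|v\|_{L^2}$ only, you leave the entire Sobolev weight $(1+\xi^2)^r$ sitting on the symbol, and your own change of variables then refutes the claimed bound: with $|m(\xi)|^2\leq C|\xi|^{2-2\alpha}e^{-c\epsilon t|\xi|^{2-\alpha}}$ on $|\xi|\geq 1$, the substitution $y=c\epsilon t|\xi|^{2-\alpha}$ gives
\begin{equation*}
\int_{|\xi|\geq 1}|\xi|^{2r+2-2\alpha}e^{-c\epsilon t|\xi|^{2-\alpha}}\,d\xi \sim C_r\,(\epsilon t)^{-\frac{2r+3-2\alpha}{2-\alpha}},
\end{equation*}
so the weight contributes an extra factor $(\epsilon t)^{-2r/(2-\alpha)}$. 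Absorbing $(1+\xi^2)^r$ into ``a fraction of the exponential'' costs $\sup_\xi(1+\xi^2)^r e^{-c\epsilon t|\xi|^{2-\alpha}/2}\sim(\epsilon t)^{-2r/(2-\alpha)}$, which is not a generic constant. After taking square roots your method yields $C(\epsilon t)^{-(r+\frac{3}{2}-\alpha)/(2-\alpha)}$; since the lemma is applied with $r>1/2$, this exponent is at least $1$, and the Duhamel integral $\int_0^t(\epsilon(t-\tau))^{-\theta}\,d\tau$ in the contraction argument would then diverge. The estimate you actually prove is therefore both different from the stated one and too weak for its intended use.

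The missing idea is to move the weight $\langle\xi\rangle^r$ off the symbol and onto $u$ and $v$ before taking any supremum. The paper does this by duality: testing against $w\in H^{-r}$, writing $\widehat{uv}=\hat u*\hat v$ (up to a constant), and using the peak inequality $\langle\xi\rangle^r\leq C\langle\xi-\eta\rangle^r\langle\eta\rangle^r$ (valid for all $r\geq 0$) to replace $\hat u,\hat v$ by $\widehat U=\langle\cdot\rangle^r\hat u$ and $\widehat V=\langle\cdot\rangle^r\hat v$; then only the factor $\langle\xi\rangle^{1-\alpha}$ coming from $|\xi|/(1+\frac{5}{4}|\xi|^\alpha)$ remains on the symbol, $\|\widehat U*\widehat V\|_{L^\infty}\leq\|u\|_{H^r}\|v\|_{H^r}$, and the $L^2$ norm of $\langle\xi\rangle^{1-\alpha}e^{-\epsilon\xi^2 t/(1+\frac{5}{4}|\xi|^\alpha)}$ produces the $r$-independent exponent $(\frac{3}{2}-\alpha)/(2-\alpha)$ via $e^{-x}\leq x^{-\theta}$. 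Your duality-free framework can be repaired the same way: instead of $|\widehat{uv}(\xi)|\leq C\|u\|_{L^2}\|v\|_{L^2}$, prove the uniform pointwise bound $\langle\xi\rangle^r|\widehat{uv}(\xi)|\leq C\|u\|_{H^r}\|v\|_{H^r}$ from the same peak inequality plus Cauchy--Schwarz, after which only $\int_{\mathbb R}|m(\xi)|^2\,d\xi$ remains and your Gamma-type computation does give $(\epsilon t)^{-(3-2\alpha)/(2-\alpha)}$. The rest of your outline (the modulus of the symbol, the role of the parabolic regularization, the $L^2$-versus-$L^\infty$ symbol-norm heuristic) is consistent with the paper's argument.
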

\begin{proof}
By duality, proving the lemma is equivalent to proving
  \begin{equation*}
     \int_{\mathbb{R}} \frac{\partial_x}{(I+\frac{5}{4}D^{\alpha})} S_t(u(x)v(x))\overline{w}(x) dx
     \leq C(\epsilon, t) \| u\|_{H^r} \|v\|_{H^r}\|w\|_{H^{-r}}
  \end{equation*}
 for all \mbox{$w \in {\cal S}(\mathbb{R})$}. Thanks to the Plancherel identity, one gets
  \begin{eqnarray*}
     && \int_{\mathbb{R}} \frac{\partial_x}{(I+\frac{5}{4}D^{\alpha})} S_t(u(x)v(x))\overline{w}(x) dx \nonumber \\
    &&\hspace{80pt}= \int_{\mathbb{R}} \frac{i \xi}{1+\frac{5}{4}|\xi|^{\alpha}}\bigg[ e^{-\big(\frac{1+ \frac{3}{4}|\xi|^{\alpha}}{1+ \frac{5}{4}|\xi|^{\alpha}} i \xi + \frac{\epsilon \xi^2}{1+ \frac{5}{4}|\xi|^{\alpha}} \big ) t} \bigg] (\widehat{u}*\widehat{v})(\xi)\widehat{\overline{w}}(\xi) d \xi\nonumber \\
     && \hspace{80pt}=\int_{\mathbb{R}} \int_{\mathbb{R}} \frac{i \xi}{1+\frac{5}{4}|\xi|^{\alpha}}\bigg[ e^{-\big(\frac{1+ \frac{3}{4}|\xi|^{\alpha}}{1+ \frac{5}{4}|\xi|^{\alpha}} i \xi + \frac{\epsilon \xi^2}{1+ \frac{5}{4}|\xi|^{\alpha}} \big ) t} \bigg]\widehat{u}(\xi-\eta)\widehat{v}(\eta)\widehat{\overline{w}}(\xi) d \eta d \xi.
  \end{eqnarray*}
Let us define
\begin{equation}
  \widehat{U} = \langle \xi \rangle ^r \widehat{u}, \hspace*{20pt}
  \widehat{V} = \langle \xi \rangle^r \widehat{v},   \hspace*{20pt}
  \widehat{W} = \langle \xi \rangle^{-r} \widehat{w}, \label{def}
\end{equation}
where $\langle \xi \rangle = (1+\xi^2)^{1/2}$. By using the triangle inequality  $\langle \xi \rangle ^r  \leq C \langle \xi-\eta \rangle ^r  \langle \eta \rangle^r$ and
$\frac{ |\xi|}{1+ \frac{5}{4}|\xi|^{\alpha}} \leq  \langle \xi \rangle ^{1-\alpha}$, we have
\begin{eqnarray*}
&& \mid \int_{\mathbb{R}} \frac{\partial_x}{(I+\frac{5}{4}D^{\alpha})} S_t(u(x)v(x))\overline{w}(x) dx \mid  \nonumber \\
&& \hspace{20pt} =   \bigg | \int_{\mathbb{R}} \int_{\mathbb{R}}    \frac{i \xi}{1+\frac{5}{4}|\xi|^{\alpha}} e^{-\big[\frac{1+ \frac{3}{4}|\xi|^{\alpha}}{1+ \frac{5}{4}|\xi|^{\alpha}} i \xi + \frac{\epsilon \xi^2}{1+ \frac{5}{4}|\xi|^{\alpha}} \big ] t} \frac{\langle \xi \rangle^r }{\langle \xi-\eta \rangle^r \langle \eta \rangle^r } \widehat{U}(\xi-\eta)\widehat{V}(\eta)\widehat{\overline{W}}(\xi)   d \eta d \xi  \bigg | \nonumber \\
&& \hspace{20pt} \leq C \|W\|_{L^2} ~~ \| \frac{i \xi}{1+\frac{5}{4}|\xi|^{\alpha}} e^{-\big[\frac{1+ \frac{3}{4}|\xi|^{\alpha}}{1+ \frac{5}{4}|\xi|^{\alpha}} i \xi + \frac{\epsilon \xi^2}{1+ \frac{5}{4}|\xi|^{\alpha}} \big ] t} \widehat{U}*\widehat{V}  \|_{L^2} \nonumber \\
&& \hspace{20pt} \leq C\|W\|_{L^2}\|\widehat{U}*\widehat{V}\|_{L^{\infty}} \| \langle \xi \rangle^{1-\alpha} e^{ -\frac{\epsilon \xi^2}{1+ \frac{5}{4}|\xi|^{\alpha}}t}  \|_{L^2}
\nonumber \\
&& \hspace{20pt} \leq C\|W\|_{L^2} \|U\|_{L^2} \|V\|_{L^2} \| \langle \xi \rangle^{1-\alpha} e^{ -\frac{\epsilon \xi^2}{1+ \frac{5}{4}|\xi|^{\alpha}}t}  \|_{L^2}
\end{eqnarray*}
The inequality $e^{-x} \leq \displaystyle \frac{1}{x^\theta}$  for $x\in \mathbb{R}$  and $\theta \in \mathbb{R}$ yields that

\begin{eqnarray*}
 && \hspace*{-40pt}\mid \int_{\mathbb{R}} \frac{\partial_x}{(I+\frac{5}{4}D^{\alpha})} S_t(u(x)v(x))\overline{w}(x) dx \mid \nonumber  \\
 &&  \hspace*{40pt} \leq  C \|W\|_{L^2}  \|U\|_{L^2} \|V\|_{L^2}   \| \langle \xi \rangle^{1-\alpha} \frac{1}{ \big( \frac{\epsilon \xi^2}{1+ \frac{5}{4}|\xi|^{\alpha}}t  \big) ^{\theta}}  \|_{L^2} \nonumber \\
  &&  \hspace*{40pt}  \leq  C \|W\|_{L^2} \|U\|_{L^2} \|V\|_{L^2} \frac{1}{\big( \epsilon  t \big) ^{\theta}}  \|\frac{1}{\langle \xi \rangle^{\theta(2-\alpha)-(1-\alpha)}} \|_{L^2}
\nonumber \\
 && \hspace*{40pt} \leq C(\epsilon,t) \|W\|_{L^2}  \|U\|_{L^2}\|V\|_{L^2}
\end{eqnarray*}
 if $ \theta > \frac{\frac{3}{2}- \alpha}{2-\alpha}.$
 By using the definitions \eqref{def}, we obtain
 \begin{equation*}
 \mid \int_{\mathbb{R}} \frac{\partial_x}{(I+\frac{5}{4}D^{\alpha})} S_t(u(x)v(x))\overline{w}(x) dx \mid
     \leq  C(\epsilon,t) \|w\|_{H^{-r}} \|u\|_{H^r}  \|v\|_{H^r}.
 \end{equation*}
 \end{proof}

\begin{lemma} \label{lemproduct}
For $r > n/2$,  $H^r (\mathbb{R}^n)$ is an algebra with respect to the
product of functions. That is, if $u, v \in H^r(\mathbb{R}^n)$ then   $uv \in H^r(\mathbb{R}^n)$  and
\begin{equation}
\|uv \|_ {H^r (\mathbb{R}^n)}  \leq C \|u\|_ {H^r (\mathbb{R}^n)} \| v \|_ {H^r (\mathbb{R}^n)}.
\end{equation}
\end{lemma}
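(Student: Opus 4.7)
The plan is to work entirely on the Fourier side, using the definition $\|u\|_{H^r}^2 = \int_{\mathbb{R}^n} \langle \xi \rangle^{2r} |\hat u(\xi)|^2\, d\xi$ with $\langle \xi \rangle = (1+|\xi|^2)^{1/2}$, together with the identity $\widehat{uv} = (2\pi)^{-n}\,\hat u * \hat v$. The claim then reduces to estimating the $L^2(\mathbb{R}^n)$ norm of $\langle \xi \rangle^r (\hat u * \hat v)(\xi)$ by $\|u\|_{H^r}\|v\|_{H^r}$.

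First I would insert the Peetre-type inequality $\langle \xi \rangle^r \le C\bigl(\langle \xi-\eta \rangle^r + \langle \eta \rangle^r\bigr)$, valid for $r \ge 0$, inside the convolution integral. This splits the quantity $\langle \xi \rangle^r |\hat u * \hat v|(\xi)$ into two symmetric pieces, one dominated by $(\langle \cdot \rangle^r|\hat u|) * |\hat v|$ and the other by $|\hat u| * (\langle \cdot \rangle^r|\hat v|)$. By the symmetry $u \leftrightarrow v$, it suffices to handle one of them.

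Next I would apply Young's convolution inequality $\|f * g\|_{L^2} \le \|f\|_{L^2}\|g\|_{L^1}$ with $f = \langle \cdot \rangle^r |\hat u|$ and $g = |\hat v|$, giving the bound $\|u\|_{H^r}\,\|\hat v\|_{L^1}$. The crucial step is now to dominate $\|\hat v\|_{L^1}$ by $\|v\|_{H^r}$: inserting the weight $\langle \eta \rangle^r \langle \eta \rangle^{-r} = 1$ and using Cauchy--Schwarz,
\begin{equation*}
\|\hat v\|_{L^1} \;=\; \int_{\mathbb{R}^n} \langle \eta \rangle^{-r}\,\langle \eta \rangle^{r} |\hat v(\eta)|\, d\eta \;\le\; \Bigl(\int_{\mathbb{R}^n} \langle \eta \rangle^{-2r}\, d\eta\Bigr)^{1/2} \|v\|_{H^r}.
\end{equation*}

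The only place where the hypothesis $r > n/2$ enters is in ensuring the integrability of $\langle \eta \rangle^{-2r}$ on $\mathbb{R}^n$; this is precisely the Sobolev embedding threshold $H^r \hookrightarrow L^\infty$ in disguise, and it is the single obstruction in the proof. Putting the two symmetric pieces back together and taking $L^2$ norms yields the desired bound $\|uv\|_{H^r} \le C \|u\|_{H^r} \|v\|_{H^r}$ with $C = C(r,n)$. No real difficulty is expected beyond correctly tracking this dimensional constraint.
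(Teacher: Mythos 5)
Your argument is correct: the additive Peetre inequality $\langle \xi \rangle^r \le C(\langle \xi-\eta\rangle^r + \langle \eta\rangle^r)$ for $r\ge 0$, Young's inequality $\|f*g\|_{L^2}\le\|f\|_{L^2}\|g\|_{L^1}$, and the Cauchy--Schwarz bound $\|\hat v\|_{L^1}\le \|\langle\cdot\rangle^{-r}\|_{L^2}\|v\|_{H^r}$ (finite exactly when $r>n/2$) fit together as claimed, and this is the standard proof of the Sobolev algebra property. The paper itself states this lemma without proof as a classical fact, so there is nothing to compare against; your Fourier-side treatment is in the same spirit as the convolution estimates the paper does carry out in its Lemma~2.1, which uses the multiplicative form $\langle\xi\rangle^r\le C\langle\xi-\eta\rangle^r\langle\eta\rangle^r$ of the same inequality.
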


\begin{corollary} \label{cor}
 Let $0<\alpha<1$ and $r > 1/2$. We have
  \begin{equation}\label{lemmaeq1}
    \| \frac{\partial_x}{(I+\displaystyle\frac{5}{4}D^{\alpha})} S_t(u^p) \|_{H^r}
     \leq C(\epsilon,t) \| u\|^p_{H^r},
  \end{equation}
  where the operator $S_t$ and $C(\epsilon,t)$  are  given in the eq. \eqref{oper}.
\end{corollary}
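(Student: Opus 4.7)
The plan is to derive the corollary as a direct consequence of Lemma \ref{lemmareg1} combined with the algebra property of $H^r(\mathbb{R})$ given in Lemma \ref{lemproduct}. Since $n=1$ and $r>1/2$, we have $r>n/2$, so $H^r(\mathbb{R})$ is a Banach algebra under pointwise multiplication.

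First, I would write $u^p = u \cdot u^{p-1}$ and apply Lemma \ref{lemmareg1} with the choice $v = u^{p-1}$. This gives
\begin{equation*}
\Bigl\| \frac{\partial_x}{(I+\tfrac{5}{4}D^{\alpha})} S_t(u^p) \Bigr\|_{H^r} \leq C(\epsilon,t)\, \|u\|_{H^r}\, \|u^{p-1}\|_{H^r},
\end{equation*}
where $C(\epsilon,t) = C(\epsilon t)^{-\frac{3/2-\alpha}{2-\alpha}}$ as in Lemma \ref{lemmareg1}.

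Next I would control $\|u^{p-1}\|_{H^r}$ by iterating Lemma \ref{lemproduct}: a straightforward induction on $p$ yields $\|u^{p-1}\|_{H^r} \leq C\,\|u\|_{H^r}^{p-1}$, with a constant depending only on $p$ (absorbed into the generic $C$). Substituting this estimate into the previous inequality immediately produces
\begin{equation*}
\Bigl\| \frac{\partial_x}{(I+\tfrac{5}{4}D^{\alpha})} S_t(u^p) \Bigr\|_{H^r} \leq C(\epsilon,t)\, \|u\|_{H^r}^{p}.
\end{equation*}

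There is no real obstacle in this argument; the only point requiring attention is to make sure $r > 1/2$ is used precisely where the algebra property is invoked (which is why this hypothesis, absent in Lemma \ref{lemmareg1}, is imposed in the corollary). For $p=1$ the statement reduces trivially to a special case of Lemma \ref{lemmareg1}, so the induction base is immediate.
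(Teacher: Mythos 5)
Your argument is correct and is exactly the intended derivation: the paper states the corollary without proof as the immediate consequence of combining the bilinear estimate of Lemma \ref{lemmareg1} (applied to $u\cdot u^{p-1}$) with the algebra property of Lemma \ref{lemproduct}, which is precisely what you do. The only minor imprecision is your closing remark about $p=1$: Lemma \ref{lemmareg1} is a bilinear estimate, so the exponent-one case is not literally a special case of it but follows from the simpler linear multiplier bound (and in any event the corollary is only invoked in the paper with exponent $p+1\geq 2$).
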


\begin{lemma}\cite{runst} \label{lemma2}
Assume that $f\in C^k(\mathbb{R})$, $u,v\in H^r(\Omega)\cap L^\infty (\Omega)$ and $k=[r]+1$, where $r\geq 0$. Then we have
\begin{equation}
\|f(u)-f(v)\|_r \leq K(M) \|u-v\|_r
\end{equation}
if $\|u\|_\infty \leq M, \|v\|_\infty \leq M, \|u\|_r \leq M$  and $\|v\|_r \leq M$, where $K(M)$ is a constant depending
on $M$ and $s$.
\end{lemma}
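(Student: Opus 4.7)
The plan is to reduce this composition-difference estimate to a product estimate via the fundamental theorem of calculus, and then invoke the standard Moser-type tame inequalities in $H^r$. First, I would write
\begin{equation*}
f(u) - f(v) = (u-v)\int_0^1 f'\bigl(v + \tau(u-v)\bigr)\, d\tau =: (u-v)\,G(u,v),
\end{equation*}
which is valid since $f \in C^1$. This converts the target into estimating $\|(u-v)\,G(u,v)\|_{H^r}$.

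Next, I would apply the Moser product (tame) inequality
\begin{equation*}
\|gh\|_{H^r} \leq C\bigl(\|g\|_{L^\infty}\|h\|_{H^r} + \|h\|_{L^\infty}\|g\|_{H^r}\bigr)
\end{equation*}
with $g = u-v$ and $h = G(u,v)$. The $L^\infty$ factors are controlled easily: $\|u-v\|_{L^\infty} \leq 2M$, and since $\|v + \tau(u-v)\|_{L^\infty} \leq M$ for every $\tau \in [0,1]$, continuity of $f'$ on $[-M,M]$ gives $\|G(u,v)\|_{L^\infty} \leq \sup_{|y|\leq M}|f'(y)|$, a quantity depending only on $M$ and $f$. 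Together with the hypothesis $\|u-v\|_{H^r} \leq 2M$, it remains to bound $\|G(u,v)\|_{H^r}$ in terms of $M$ alone.

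To that end, Minkowski's integral inequality yields
\begin{equation*}
\|G(u,v)\|_{H^r} \leq \int_0^1 \bigl\|f'\bigl(v + \tau(u-v)\bigr)\bigr\|_{H^r}\, d\tau,
\end{equation*}
and I would then invoke the Moser composition estimate: for $F \in C^k$ with $k=[r]+1$ and $w \in H^r \cap L^\infty$, one has $\|F(w) - F(0)\|_{H^r} \leq K(\|w\|_{L^\infty})\|w\|_{H^r}$. Applied to $F = f'$ and $w = v + \tau(u-v)$, this produces the required bound, and assembling the pieces gives the claim.

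The main obstacle is precisely the composition estimate invoked in the last step. For integer $r$ it follows by iterated chain-rule expansion combined with the product inequality above, but for fractional $r$ one needs a Littlewood--Paley or paraproduct decomposition to handle the nonlocal operator $J^r$, together with careful bookkeeping of how the constants depend on $\|w\|_{L^\infty}$. This technical core is exactly what is carried out in \cite{runst}, which is why citing the lemma is more economical than reproducing the full argument here.
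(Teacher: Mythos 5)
First, a point of comparison: the paper does not prove this lemma at all --- it is imported verbatim from \cite{runst} --- so there is no in-paper argument to measure your sketch against, and the review below is of the sketch on its own terms. The reduction $f(u)-f(v)=(u-v)\,G(u,v)$ via the fundamental theorem of calculus, followed by a tame product estimate and a Moser composition estimate, is indeed the standard route, and your $L^\infty$ bounds on $u-v$ and on $G$ are correct. The assembly, however, has two concrete gaps.

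The main one is that your final estimate is not actually Lipschitz. The tame product inequality gives $\|(u-v)G\|_{H^r}\le C\bigl(\|u-v\|_{L^\infty}\|G\|_{H^r}+\|G\|_{L^\infty}\|u-v\|_{H^r}\bigr)$; the second term is the Lipschitz term you want, but you bound the first by $2M$ times a constant depending on $M$, which yields only $\|f(u)-f(v)\|_{H^r}\le C(M)+K(M)\|u-v\|_{H^r}$ --- a bound that does not vanish as $u\to v$ in $H^r$ and is strictly weaker than the claim. To close it you must replace $\|u-v\|_{L^\infty}\le 2M$ by $\|u-v\|_{L^\infty}\le C\|u-v\|_{H^r}$, i.e., invoke the Sobolev embedding, which requires $r>1/2$; that covers the application in this paper but not the stated range $r\ge 0$, and the sub-embedding range is precisely where the paraproduct analysis of \cite{runst} is unavoidable. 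The second gap is that $G(u,v)=\int_0^1 f'\bigl(v+\tau(u-v)\bigr)\,d\tau$ is generally \emph{not} in $H^r(\mathbb{R})$, because constants do not belong to $H^r$ of the whole line: unless $f'(0)=0$ you must split $G=f'(0)+\tilde G$, estimate $f'(0)(u-v)$ trivially, and apply the product estimate only to $(u-v)\tilde G$; your own statement of the composition estimate, which controls $\|F(w)-F(0)\|_{H^r}$ rather than $\|F(w)\|_{H^r}$, already signals this. A smaller bookkeeping issue: applying that composition estimate to $F=f'$ for non-integer $r$ asks for $f'\in C^{[r]+1}$, i.e., $f\in C^{[r]+2}$, one derivative more than the hypothesis $f\in C^{[r]+1}$; recovering the sharp regularity is again part of what the cited reference supplies rather than something the elementary argument delivers.
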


Now, we  prove the existence and uniqueness of the local solution for the reqularized  problem  \eqref{reg1}-\eqref{regin} by using the contraction mapping principle.
\begin{theorem}
Assume that $u_0^\epsilon \in H^r (\mathbb{R})$ with $r>1/2$ and $0<\alpha<1$. Then, for any $\epsilon >0$ there exists a unique solution $u^\epsilon \in C([0,T_{\epsilon}],H^r(\mathbb{R}))$ to the Cauchy problem
\eqref{reg1}-\eqref{regin}, where $T_\epsilon=T(\|u_0^{\epsilon}\|,\epsilon)$.

\end{theorem}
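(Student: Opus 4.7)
The plan is to apply Banach's contraction mapping principle to the integral operator $\Phi^\epsilon$ defined in \eqref{reg2}, working in the Banach space $X_T = C([0,T], H^r(\mathbb{R}))$ with its usual sup-in-time norm. I would fix $M = 2\|u_0^\epsilon\|_{H^r}$ and the closed ball $B_M = \{u \in X_T : \sup_{0 \leq t \leq T} \|u(t)\|_{H^r} \leq M\}$. Since the Fourier symbol of $S_t$ has modulus $\exp(-\epsilon \xi^2 t/(1 + \tfrac{5}{4}|\xi|^\alpha)) \leq 1$, the free propagator is a contraction on $H^r$ for every $t \geq 0$; in particular $\|S_t u_0^\epsilon\|_{H^r} \leq M/2$, and strong continuity of $t \mapsto S_t u_0^\epsilon$ into $H^r$ follows from dominated convergence.

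For the self-mapping property $\Phi^\epsilon : B_M \to B_M$, apply Corollary \ref{cor} with the nonlinearity $(u^\epsilon)^{p+1}$ (the proof of the corollary works verbatim for any positive integer exponent, since it just iterates Lemma \ref{lemmareg1} via the algebra property) to obtain
\begin{equation*}
\Big\| S_{t-\tau} \frac{\partial_x}{I + \frac{5}{4} D^\alpha}\,(u^{p+1})(\tau) \Big\|_{H^r} \leq C(\epsilon, t-\tau) \|u(\tau)\|_{H^r}^{p+1} \leq C(\epsilon, t-\tau) M^{p+1}.
\end{equation*}
The singular exponent $\beta = (3/2 - \alpha)/(2-\alpha)$ lies in $(1/2, 3/4)$ for $\alpha \in (0,1)$, so the kernel $C(\epsilon, t-\tau) = C(\epsilon(t-\tau))^{-\beta}$ is integrable on $[0,t]$ with $\int_0^t C(\epsilon, t-\tau)\,d\tau \leq C \epsilon^{-\beta} T^{1-\beta}/(1-\beta)$. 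Combining this with the bound on the linear term yields $\|\Phi^\epsilon u\|_{X_T} \leq M/2 + \tfrac12 C \epsilon^{-\beta} M^{p+1} T^{1-\beta}/(1-\beta)$, and forcing the second summand to be at most $M/2$ fixes a first upper bound $T_1 = T_1(\|u_0^\epsilon\|_{H^r}, \epsilon)$.

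For the contraction estimate, write
$u^{p+1} - v^{p+1} = (u-v)\,Q(u,v)$ with $Q(u,v) = \sum_{j=0}^p u^j v^{p-j}$,
and apply Lemma \ref{lemmareg1} directly to the product $(u-v) \cdot Q(u,v)$, controlling $\|Q(u,v)\|_{H^r} \leq C_p M^p$ via the algebra property in Lemma \ref{lemproduct}. Integrating in time against the same singular kernel,
\begin{equation*}
\|\Phi^\epsilon u - \Phi^\epsilon v\|_{X_T} \leq C'(M, \epsilon)\, T^{1-\beta} \|u - v\|_{X_T}.
\end{equation*}
Shrinking $T$ further to some $T_\epsilon \leq T_1$ makes this constant smaller than $1/2$, so $\Phi^\epsilon$ is a strict contraction on $B_M$. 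Banach's theorem then produces a unique fixed point $u^\epsilon \in B_M$, which is the desired local solution in $C([0,T_\epsilon], H^r(\mathbb{R}))$.

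The only genuine technical point is the time-integrability of the kernel $C(\epsilon, t)$ near the diagonal $\tau = t$, which is available precisely because $\beta < 1$ throughout the admissible range $0 < \alpha < 1$. Everything else---continuity in time of the Duhamel integral (split into $[0, t-\delta]$ where the integrand is continuous and a vanishing endpoint piece controlled by the integrable singularity), measurability of the integrand, and the passage from the bilinear estimate of Lemma \ref{lemmareg1} to the Lipschitz bound via the factorization $u^{p+1} - v^{p+1} = (u-v) Q(u,v)$---follows the standard contraction-mapping template once this integrability is secured.
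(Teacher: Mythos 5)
Your proposal is correct and follows essentially the same route as the paper: a Banach fixed-point argument for the Duhamel operator $\Phi^{\epsilon}$ on the ball of radius $2\|u_0^{\epsilon}\|_{H^r}$ in $C([0,T],H^r)$, with the self-mapping and Lipschitz bounds both resting on the smoothing estimate of Lemma \ref{lemmareg1} / Corollary \ref{cor} and the integrability of the kernel $(\epsilon(t-\tau))^{-\beta}$, $\beta=\frac{3/2-\alpha}{2-\alpha}<1$. The only departures are minor: you obtain the Lipschitz estimate from the elementary factorization $u^{p+1}-v^{p+1}=(u-v)\sum_{j=0}^{p}u^{j}v^{p-j}$ together with the algebra property of $H^r$ instead of invoking the composition estimate of Lemma \ref{lemma2}, and your explicit $T^{1-\beta}$ accounting of the time integral is in fact slightly more careful than the paper's $C_{\epsilon,r}T$; neither changes the substance of the argument.
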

\begin{proof}
Let $\overline{B}_T$ be the closed ball
\begin{equation*}
  \overline{B}_T=\{u \in C([0,T], H^r(\mathbb{R})), \|u\|_{L^{\infty}([0,T],H^r(\mathbb{R}))} \leq 2 \|u_0\|_{H^r(\mathbb{R})}\}.
\end{equation*}
We first prove  $\Phi^{\epsilon}$ maps $\overline{B}_T$ into $\overline{B}_T$ for $T$ small enough. From \eqref{reg2}, one gets
\begin{eqnarray*}\label{reg3}
  \|\Phi^{\epsilon} u^{\epsilon}(t)\|_{H^r} & = & \|S_tu_0^{\epsilon} - \frac{1}{2} \int_{0}^{t} S_{t-\tau} \big[ \frac{\partial_x}{(I+\frac{5}{4}D^{\alpha})} (u^{\epsilon})^{p+1} \big](\tau) d\tau \|_{H^r} \nonumber \\
  &\leq &  \|u_0^{\epsilon}\|_{H^r} +\frac{1}{2}  \int_{0}^{t} \| S_{t-\tau} \big[ \frac{\partial_x}{(I+\frac{5}{4}D^{\alpha})} (u^{\epsilon})^{p+1}(\tau) \big] \|_{H^r}  d\tau.
\end{eqnarray*}

\noindent
The Corollary \ref{cor} implies that
\begin{equation}\label{reg4}
  \| S_{t-\tau} \big[ \frac{\partial_x}{(I+\frac{5}{4}D^{\alpha})} (u^{\epsilon})^{p+1}(\tau)\big] \|_{H^r} \leq  C(\epsilon(t-\tau))^{-\frac{\frac{3}{2}-\alpha}{2-\alpha}}\|u^{\epsilon}(t)\|_{H^r}^{p+1}.
\end{equation}
Using \eqref{reg4}, we have
\begin{equation*}
  \|\Phi^{\epsilon} u^{\epsilon}(t)\|_{H^r} \leq \|u_0^{\epsilon}\|_{H^r} + C_{\epsilon,r}T \underset{t \in [0,T] }{\mbox{sup}} \|u^{\epsilon}(t)\|_{H^r}^{p+1}
\end{equation*}
By  choosing $T$ small enough to satisfy  $T \leq \displaystyle\frac{1}{C_{\epsilon,r} 2^{p+1}\|u_0^{\epsilon}\|_{H^r}^p}$ gives  that $\Phi^{\epsilon}$ maps $\overline{B}_T$ into $\overline{B}_T$.

\noindent
The next step is to prove that  $\Phi^{\epsilon}$ is contractive. Let $u_1^{\epsilon}$, $u_2^{\epsilon}$ $\in$ $\overline{B}_T$. The Duhamel formula  \eqref{reg2} gives
\begin{equation}
  \|\Phi^{\epsilon} u_1^{\epsilon}(t)- \Phi^{\epsilon} u_2^{\epsilon}(t)\|_{H^r}    \leq  \frac{1}{2} \int_{0}^{t} \| S_{t-\tau} \frac{\partial_x}{(I+\frac{5}{4}D^{\alpha})} \big[(u_1^{\epsilon})^{p+1}-(u_2^{\epsilon})^{p+1} \big](\tau) \|_{H^r} d\tau
\end{equation}
and  Corollary \ref{cor} and Lemma \ref{lemma2} imply
\begin{eqnarray}
\|\Phi^{\epsilon} u_1^{\epsilon}(t)- \Phi^{\epsilon} u_2^{\epsilon}(t)\|_{H^r}   &\leq &  C \int_{0}^{t}  (\epsilon(t-\tau))^{-\frac{\frac{3}{2}-\alpha}{2-\alpha}}  \| (u_1^{\epsilon})^{p+1}(\tau)- ({u_2^{\epsilon}})^{p+1}(\tau) \|_{H^r} d\tau ,  \nonumber\\
  &\leq & C_{\epsilon,r}T K(M) \underset{t \in [0,T] }{\mbox{sup}} \|u_1^{\epsilon}(t)-u_2^{\epsilon}(t)\|_{H^r}.  \nonumber
\end{eqnarray}
where $\|u_1^{\epsilon}\|_{H^r} \leq M $ and $\|u_2^{\epsilon} \|_{H^r} \leq M$. If we choose $T <\frac{1}{C_{\epsilon,r}K(M)}$, then  $\Phi^{\epsilon}$ is strictly contractive.

\noindent
To obtain the continuity with respect to the initial data, we consider the solutions $u^{\epsilon}$ and $v^{\epsilon}$ in $H^r(\mathbb{R})$ corresponding to initial conditions $u_0^{\epsilon}$ and  $v_0^{\epsilon}$, respectively,
with
$\|u_0^{\epsilon}\|_{H^r} \leq M $ and $\|v_0^{\epsilon} \|_{H^r} \leq M$. Similar computations show that
\begin{eqnarray}
&& \hspace*{-20pt}  \| u^{\epsilon}(t)- v^{\epsilon}(t)\|_{H^r}   \nonumber\\
  && \leq  \| S_t \big(u_0^{\epsilon}-v_0^{\epsilon} \big) \|_{H^r} + \int_{0}^{t} \| S_{t-\tau} \frac{\partial_x}{(I+\frac{5}{4}D^{\alpha})} \big((u^{\epsilon})^{p+1}-(v^{\epsilon})^{p+1} \big) (\tau)\|_{H^r} d\tau,  \nonumber \\
  && \leq  \| u_0^{\epsilon}- v_0^{\epsilon}\|_{H^r} + C_{\epsilon,r} T K(M) \sup_{t\in [0,T]}\| u^{\epsilon}(t)- v^{\epsilon}(t) \|_{H^r}.  \nonumber
\end{eqnarray}
The inequality
\begin{equation}
  \sup_{t\in [0,T]} \| u^{\epsilon}(t)- v^{\epsilon}(t)\|_{H^r}  \leq  \frac{1}{1-C_{\epsilon,r}TK(M)}\|u_0^{\epsilon}- v_0^{\epsilon}\|_{H^r} \nonumber
\end{equation}
with $1-C_{\epsilon,r}T K(M)>0$  yields that  the solution depends continuously on the given initial data since it is bounded by a
continuous function related to the difference of the initial data.
\end{proof}

\begin{theorem} \label{Thm1}

Let $0 < \alpha < 1$ and  the initial data in $u_0^{\epsilon} \in H^{r}(\mathbb{R})$ with  \mbox{$r >\frac{3}{2}-\alpha$}. Then the unique regularized
solution $C([0,T],H^{r}(\mathbb{R}))$   to the eqs. \eqref{reg1}-\eqref{regin}  satisfies  the energy estimate
\begin{equation}\label{ener}
  \frac{d}{dt} \| u^{\epsilon}(t)\|^{2}_{H^{r}(\mathbb{R})} \leq C \| u^{\epsilon}(t)\|^{p+2}_{H^{r}(\mathbb{R})}.
\end{equation}

\end{theorem}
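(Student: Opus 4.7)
The plan is to derive a differential inequality for the $H^r$ energy of $u^\epsilon$ by applying $J^r$ to the regularized equation \eqref{reg1} and taking the $L^2$ inner product with $J^r u^\epsilon$. All six resulting terms can be analyzed individually, and the linear dispersive terms contribute either vanishing integrals or sign-definite quantities, reducing the problem to estimating a single nonlinear bracket.

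I would first compute each term in the identity. Since the Fourier multipliers $i\xi$ and $i\xi|\xi|^{\alpha}$ associated with $\partial_x u^\epsilon$ and $D^{\alpha}\partial_x u^\epsilon$ are purely imaginary and $u^\epsilon$ is real-valued, the brackets $(J^r u_x^\epsilon, J^r u^\epsilon)$ and $(J^r D^\alpha u_x^\epsilon, J^r u^\epsilon)$ both vanish. The two time-derivative contributions combine into the single total derivative
\begin{equation*}
\frac{1}{2}\frac{d}{dt}\Big[\|u^\epsilon\|^2_{H^r} + \tfrac{5}{4}\|D^{\alpha/2}u^\epsilon\|^2_{H^r}\Big],
\end{equation*}
while the viscous term $-\epsilon(J^r u_{xx}^\epsilon, J^r u^\epsilon)=\epsilon\|u_x^\epsilon\|^2_{H^r}\geq 0$ is non-negative and can be kept on the left-hand side as a reservoir to absorb any derivative loss coming from the nonlinear term. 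Since the LHS combination dominates $\|u^\epsilon\|^2_{H^r}$, it is enough to bound its time derivative.

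The remaining step is to estimate the nonlinear contribution $-\tfrac{1}{2}(J^r\partial_x(u^\epsilon)^{p+1}, J^r u^\epsilon)$. I would integrate by parts once, commute $J^r$ with $\partial_x$, and then apply Cauchy-Schwarz together with the algebra property of Lemma \ref{lemproduct}, which is available because $r>3/2-\alpha>1/2$. This produces a bound of the form $C\|u^\epsilon\|_{H^r}^{p+1}\|\partial_x u^\epsilon\|_{H^r}$ or, after pairing with the Bessel potential more carefully, a bound involving $\|u^\epsilon\|^{p+1}_{H^{r+1-\alpha}}\|u^\epsilon\|_{H^r}$. The main obstacle is precisely this extra derivative: a naive Cauchy-Schwarz costs one full derivative, which is not controlled by $\|u^\epsilon\|_{H^r}$.

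To close the estimate with exponent $p+2$ and no derivative loss, I would exploit the fractional smoothing of $(I+\tfrac{5}{4}D^{\alpha})^{-1}$, which appears naturally when one isolates $u_t^\epsilon$ in the equation: the composed operator $(I+\tfrac{5}{4}D^\alpha)^{-1}\partial_x$ has Fourier symbol $\tfrac{i\xi}{1+\frac{5}{4}|\xi|^\alpha}$ of order $1-\alpha$, so it only costs $1-\alpha$ derivatives instead of one full derivative. The condition $r>3/2-\alpha$ is exactly what is needed for $r-(1-\alpha)=r+\alpha-1>1/2$, so the $H^{r+\alpha-1}$ algebra property of Lemma \ref{lemproduct} then converts the nonlinear bracket into a clean Moser estimate $\|(u^\epsilon)^{p+1}\|_{H^{r+1-\alpha}}\leq C\|u^\epsilon\|^{p+1}_{H^r}\cdot\|u^\epsilon\|_{L^\infty}^{?}$ combined with the Sobolev embedding $H^r\hookrightarrow L^\infty$. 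Collecting everything yields $\tfrac{d}{dt}\|u^\epsilon\|^2_{H^r}\leq C\|u^\epsilon\|^{p+2}_{H^r}$, as claimed. Any residual derivative of $u^\epsilon$ that does not fit into this scheme can be absorbed through Young's inequality against the $\epsilon\|u_x^\epsilon\|^2_{H^r}$ reservoir, which is harmless at the level of this estimate (the constant $C$ may then depend on $\epsilon$ implicitly through the regularization scale).
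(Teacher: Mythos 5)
Your setup (apply a Bessel potential, pair with the solution, kill the skew\--symmetric linear terms, keep the viscous term as a sign) matches the paper, and you correctly identify the real obstacle: the nonlinear bracket naively costs a full derivative. But the two mechanisms you propose to close it do not work. First, the operator $(I+\tfrac{5}{4}D^{\alpha})^{-1}\partial_x$ has symbol of order $1-\alpha>0$ for $0<\alpha<1$, so it is not smoothing; the bound it yields is $\|(I+\tfrac{5}{4}D^{\alpha})^{-1}\partial_x (u^{p+1})\|_{H^r}\lesssim \|u^{p+1}\|_{H^{r+1-\alpha}}$, and your claimed estimate $\|(u^{\epsilon})^{p+1}\|_{H^{r+1-\alpha}}\leq C\|u^{\epsilon}\|^{p+1}_{H^r}$ is false: it bounds a \emph{stronger} norm by weaker ones. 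The condition $r+\alpha-1>1/2$ gives the algebra property in $H^{r+\alpha-1}$, which is a \emph{lower} regularity space than $H^{r+1-\alpha}$ (you have swapped the two exponents), so a net loss of $1-\alpha$ derivatives remains. Even crediting the extra $\alpha/2$ of regularity that the energy inherits from the $\tfrac{5}{4}D^{\alpha}u_t$ term, the crude Cauchy--Schwarz route closes only when $r+1-\alpha\leq r+\alpha/2$, i.e.\ $\alpha\geq 2/3$, not on the whole range $0<\alpha<1$. Second, your fallback of absorbing the residual derivative into $\epsilon\|u_x^{\epsilon}\|^2_{H^r}$ via Young's inequality produces a constant depending on $\epsilon$ (and an exponent other than $p+2$ after interpolation); this defeats the purpose of the estimate, which must be uniform in $\epsilon$ so that the existence time survives the limit $\epsilon\to 0$ in the convergence theorem that follows.

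The ingredient you are missing is the commutator cancellation. The paper applies $J^s$ with $s=r-\alpha/2$ (so that the $D^{\alpha}u_t$ term upgrades the energy exactly to $\|u^{\epsilon}\|^2_{H^r}$), writes the nonlinearity via the fractional Leibniz rule as
\begin{equation*}
J^s\bigl((u^{\epsilon})^p u^{\epsilon}_x\bigr)=(u^{\epsilon})^pJ^su^{\epsilon}_x+u^{\epsilon}_xJ^s(u^{\epsilon})^p+R,
\end{equation*}
and then uses the identity $\int(u^{\epsilon})^pJ^su^{\epsilon}_x\,J^su^{\epsilon}\,dx=-\tfrac12\int[(u^{\epsilon})^p]_x(J^su^{\epsilon})^2dx$ to move the top\--order derivative off $J^su^{\epsilon}$ entirely; the remaining pieces and the Kato--Ponce remainder $R$ are then handled with H\"older and Sobolev embeddings, which is where the threshold $s\geq\tfrac32(1-\alpha)$, i.e.\ $r\geq\tfrac32-\alpha$, actually comes from. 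Without this cancellation (or an equivalent commutator estimate), your argument does not yield \eqref{ener} with an $\epsilon$\--independent constant on the full range $0<\alpha<1$.
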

\textbf{Proof:} Let $r=s+\frac{\alpha}{2}$. Applying the operator $J^s$ to  the eq. \eqref{reg1},  multiplying both sides of the equation by $J^s u^{\epsilon}$ and then integrating on the whole line, we have
\begin{equation}\label{wp1}
  \frac{1}{2} \frac{d}{dt} \int_{\mathbb{R}} ( |J^s u^{\epsilon}|^2 + \frac{5}{4} |J^{s+\frac{\alpha}{2}}u^{\epsilon}|^2 ) dx +\epsilon  \frac{d}{dt} \int_{\mathbb{R}} (J^s u^{\epsilon}_x)^2 dx = -\frac{p+1}{2} \int_{\mathbb{R}} J^s u^{\epsilon} J^s\big((u^{\epsilon})^p u^{\epsilon}_x \big )dx.
\end{equation}
Using the fractional Leibniz rule  \cite{kenig},  the term $ J^s\big((u^{\epsilon})^p u^{\epsilon}_x \big )$  is written as
\begin{equation*}
  J^s\big((u^{\epsilon})^p u^{\epsilon}_x \big ) =(u^{\epsilon})^pJ^su^{\epsilon}_x+ u^{\epsilon}_xJ^s(u^{\epsilon})^p+R,
\end{equation*}
where $R$ is the remainder. Then, the eq. \eqref{wp1} becomes
\begin{eqnarray} \label{wp2}
  \frac{1}{2}\frac{d}{dt} \int_{\mathbb{R}} ( |J^su^{\epsilon}|^2 &+& \frac{5}{4} |J^{s+\frac{\alpha}{2}}u^{\epsilon}|^2 )  dx   +\epsilon  \frac{d}{dt} \int_{\mathbb{R}} (J^s u^{\epsilon}_x)^2 dx  \nonumber \\
  &=& -\frac{p+1}{2} \left( \int_{\mathbb{R}}(u^{\epsilon})^pJ^su^{\epsilon}_xJ^s u^{\epsilon} dx + \int_{\mathbb{R}} u^{\epsilon}_xJ^s(u^{\epsilon})^pJ^s u^{\epsilon} dx + \int_{\mathbb{R}}RJ^su^{\epsilon} dx \right).  \nonumber \\
&&
\end{eqnarray}
Using the  integration by parts  for the first term of RHS, we have
\begin{equation*}
  \int_{\mathbb{R}}  (u^{\epsilon})^p J^s u^{\epsilon}_x J^s u^{\epsilon} dx=-\frac{1}{2} \int_{\mathbb{R}} [(u^{\epsilon})^{p}]_x(J^su^{\epsilon})^2dx.
\end{equation*}
Thus by using the H{\"o}lder's inequality,  the first term of RHS is estimated as
\begin{eqnarray} \label{local1}
  \int_{\mathbb{R}} |[(u^{\epsilon})^p]_x(J^su^{\epsilon})^2|dx &\leq& C \|[(u^{\epsilon})^p]_x\|_{L^{q_1}}   \|J^s u^{\epsilon}\|^2_{L^{q_2}} \nonumber\\
  &\leq& C \|[(u^{\epsilon})^p]_x\|_{H^{s+\frac{\alpha}{2}-1}}  \|J^s u^{\epsilon}\|^2_{H^{\frac{\alpha}{2}}} \nonumber \\
  &\leq & C \|u^{\epsilon}\|^{p}_{H^{s+\frac{\alpha}{2}}}   \|J^s u^{\epsilon}\|^{2}_{H^{\frac{\alpha}{2}}}  \nonumber \\
  &\leq & C \|u^{\epsilon}\|^{p+2}_{H^{s+\frac{\alpha}{2}}}.
\end{eqnarray}
Here, we have used the following Sobolev imbeddings
\begin{eqnarray}
  H^{s+\frac{\alpha}{2}-1} & \hookrightarrow & L^{q_1},~~\mbox{for}~~{q_1}\leq \frac{ 2}{3-2s-\alpha}, \nonumber\\
  H^{\frac{\alpha}{2}} & \hookrightarrow & L^{q_2},~~\mbox{for}~~{q_2}\leq \frac{2}{1-\alpha},   \label{embedding1}
\end{eqnarray}
where $\frac{1}{q_1} + \frac{2}{q_2} \leq 1$
 implies $ s \geq \frac{3}{2}(1-\alpha)$. By Lemma \ref{lemproduct}, $u^{\epsilon}~\in~H^{s+\frac{\alpha}{2}}(\mathbb{R})$ ensures $(u^{\epsilon})^p~\in~H^{s+\frac{\alpha}{2}}(\mathbb{R})$, as the  condition $ s\geq \frac{3}{2}(1 - \alpha) $ guarantees that $H^{s+\frac{\alpha}{2}}(\mathbb{R})$ is an algebra.

\noindent
The estimation of the second term in RHS of the eq. \eqref{wp2} is given as
\begin{eqnarray} \label{local2}
  \int_{\mathbb{R}} |u^{\epsilon}_x J^su^{\epsilon} J^s (u^{\epsilon})^p| dx &\leq&  \|u^{\epsilon}_x\|_{L^{q_3}}\|J^s u^{\epsilon}\|_{L^{q_4}} \|J^s (u^{\epsilon})^p\|_{L^{q_5}} \nonumber\\
  &\leq& C \|u^{\epsilon}_x\|_{H^{s+\frac{\alpha}{2}-1}}  \|J^s u^{\epsilon} \|_{H^{\frac{\alpha}{2}}}  \|J^s (u^{\epsilon})^p\|_{H^{\frac{\alpha}{2}}} \nonumber\\
  &\leq & C \|u^{\epsilon}\|^{p+2}_{H^{s+\frac{\alpha}{2}}},
\end{eqnarray}
where $\frac{1}{q_3} + \frac{1}{q_4} + \frac{1}{q_5} \leq 1$.
Here the Sobolev imbeddings
\begin{eqnarray*}
  H^{s+\frac{\alpha}{2}-1} & \hookrightarrow & L^{q_3}, ~~\mbox{for}~~ {q_3} \leq \frac{ 2}{3-2s-\alpha},\\
  H^{\frac{\alpha}{2}} & \hookrightarrow & L^{q_4},~~\mbox{for}~~{q_4}\leq \frac{ 2}{1-\alpha},\\
  H^{\frac{\alpha}{2}} & \hookrightarrow & L^{q_5},~~\mbox{for}~~{q_5}\leq \frac{2}{1-\alpha},
\end{eqnarray*}
provide $ s\geq \frac{3}{2}(1 - \alpha) $.


\noindent
The estimation of  last term of the eq. \eqref{wp2},
\begin{equation*}
  \int_{\mathbb{R}}| R J^s u^{\epsilon} | dx  \leq \| R \|_{L^{2/(1+\alpha)}} \|J^s u^{\epsilon}\|_{L^{2/(1-\alpha)}} \leq C \|R\|_{L^{2/(1+\alpha)}}  \|u^{\epsilon}\|_{H^{s+\frac{\alpha}{2}}},
\end{equation*}
 follows directly from
H{\"o}lder's inequality and the Sobolev imbedding $\displaystyle H^{\frac{\alpha}{2}}\hookrightarrow L^{2/(1-\alpha)}$. Following \cite{linares,kenig} and using
\begin{eqnarray*}
H^{\frac{\alpha}{2}+\mu}(\mathbb{R})  & \hookrightarrow & L^{4/(1+\alpha)}, ~~~~  \frac{4}{1+\alpha}\leq \frac{2}{1-2(\frac{\alpha}{2}+\mu)},\\
 H^{s+\frac{\alpha}{2}-1-\mu}(\mathbb{R})  & \hookrightarrow & L^{4/(1+\alpha)}, ~~~~\frac{4}{1+\alpha}\leq \frac{2}{1-2(s+\frac{\alpha}{2}-1-\mu)}
 \end{eqnarray*}
for any $0<\mu <s $, one gets
\begin{equation*}
  \|R\|_{L^{2/(1+\alpha)}} \leq C \| J^{s-\mu}(u^{\epsilon})^p\|_{L^{4/(1+\alpha)}} \|J^{\mu}(u^{\epsilon})_x\|_{L^{4/(1+\alpha)}} \leq  C \|u^{\epsilon}\|_{H^{s+\frac{\alpha}{2}}}^p \|u^{\epsilon}\|_{H^{s+\frac{\alpha}{2}}},
\end{equation*}

%
\noindent
and finally
\begin{equation}\label{local3}
  \int_{\mathbb{R}} | R J^s u^{\epsilon} | dx  \leq  C \|u^{\epsilon}\|^{p+2}_{H^{s+\frac{\alpha}{2}}}.
\end{equation}
Choosing a suitable $\mu$, last restriction provides $ s\geq \frac{3}{2}(1-\alpha) $ as above.
Combining the eqs.  \eqref{local1}, \eqref{local2} and \eqref{local3}, we have
\begin{equation*}
  \frac{d}{dt} \|J^{s+\frac{\alpha}{2}} u^{\epsilon}(t)\|^{2}_{L^{2}} \leq C \|J^{s+\frac{\alpha}{2}} u^{\epsilon}(t)\|^{p+2}_{L^{2}}.
\end{equation*}
From \mbox{$\| J^s u \|_{L^{2}}=   \| u \|_{H^{s}}$},  the energy estimate is given by
\begin{equation}
  \frac{d}{dt} \| u^{\epsilon}(t)\|^{2}_{H^{r}} \leq C \| u^{\epsilon}(t)\|^{p+2}_{H^{r}}.
\end{equation}
Since $\|u^{\epsilon} (t) \|^2_{H^{r}} \leq y(t)$ where $y(t)$ is the solution of the following differential equation
\begin{eqnarray}
&& y^{\prime} (t)=C [ y (t) ]^{\frac{p+2}{2}} \nonumber \\
&& y(0)=\| u_0^{\epsilon} \|_{H^{r}}^2.
\end{eqnarray}
The energy bound  is given by
\begin{equation}
y(t)=\frac{y(0)}{\{2-Cp[y(0)]^{p/2} ~t \}^{2/p}}= \frac{\| u_0^{\epsilon} \|^2_{H^{r}}}{ \{ 2-Cp\| u_0^{\epsilon} \|_{H^{r}}^p ~t \}^{2/p} }.
\end{equation}

\begin{theorem}
Let $0<\alpha <1, r \geq  2 - \frac{\alpha}{2} $ and $u_0 \in H^r(\mathbb{R})$. Then there exists a time $T>0$, the solution $u^\epsilon$ of  the Cauchy problem for the eqs. \eqref{reg1}-\eqref{regin} converges uniformly to
$u$ of the Cauchy problem for the eqs. \eqref{gfBBM}-\eqref{incon} in $C([0,T],H^{r}(\mathbb{R}))$  as $\epsilon\rightarrow 0$.
\end{theorem}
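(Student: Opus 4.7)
The plan is the standard vanishing-viscosity scheme in three stages: a uniform bound for the family $\{u^\epsilon\}$, a Cauchy estimate in a lower Sobolev norm, and passage to the limit in the equation. Since $u_0^\epsilon \to u_0$ in $H^r$, the initial norms are uniformly bounded, so the energy estimate of Theorem~\ref{Thm1} yields a common existence time $T > 0$ on which $\{u^\epsilon\}$ is uniformly bounded in $L^\infty([0,T], H^r(\mathbb{R}))$.

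The heart of the argument is the difference estimate. For $\epsilon_1,\epsilon_2 > 0$, set $w = u^{\epsilon_1} - u^{\epsilon_2}$. Subtracting the two regularizations gives
\begin{equation*}
\bigl(I + \tfrac{5}{4}D^\alpha\bigr)w_t + \bigl(I + \tfrac{3}{4}D^\alpha\bigr)w_x + \tfrac{1}{2}\partial_x\bigl[(u^{\epsilon_1})^{p+1} - (u^{\epsilon_2})^{p+1}\bigr] = \epsilon_1 u^{\epsilon_1}_{xx} - \epsilon_2 u^{\epsilon_2}_{xx}.
\end{equation*}
I would pair with $w$ in $L^2$ and integrate over $\mathbb{R}$. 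The transport and fractional transport contributions vanish by antisymmetry, while the time-derivative term produces $\tfrac{1}{2}\tfrac{d}{dt}\bigl(\|w\|_{L^2}^2 + \tfrac{5}{4}\|D^{\alpha/2}w\|_{L^2}^2\bigr)$, which is equivalent to $\tfrac{1}{2}\tfrac{d}{dt}\|w\|_{H^{\alpha/2}}^2$. The nonlinear term is handled by writing $(u^{\epsilon_1})^{p+1} - (u^{\epsilon_2})^{p+1} = w \sum_{k=0}^{p}(u^{\epsilon_1})^{k}(u^{\epsilon_2})^{p-k}$, integrating by parts, and using the embedding $H^r \hookrightarrow W^{1,\infty}$ (valid since $r \geq 2 - \alpha/2 > 3/2$); this produces a bound of order $\|w\|_{H^{\alpha/2}}^2$ via the uniform $H^r$ bound. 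For the dissipation terms, $H^{\alpha/2}$--$H^{-\alpha/2}$ duality gives
\begin{equation*}
\left|\int_{\mathbb{R}} \epsilon u^\epsilon_{xx}\, w\, dx\right| \leq C\epsilon\, \|u^\epsilon\|_{H^{2-\alpha/2}}\, \|w\|_{H^{\alpha/2}},
\end{equation*}
and this is precisely where the hypothesis $r \geq 2 - \tfrac{\alpha}{2}$ enters: it makes $\|u^\epsilon\|_{H^{2-\alpha/2}}$ controlled by the uniform bound from the first stage. Young's inequality followed by Gronwall then yields
\begin{equation*}
\sup_{t\in[0,T]} \|u^{\epsilon_1}(t) - u^{\epsilon_2}(t)\|_{H^{\alpha/2}}^2 \leq e^{CT}\bigl(\|u_0^{\epsilon_1} - u_0^{\epsilon_2}\|_{H^{\alpha/2}}^2 + CT(\epsilon_1+\epsilon_2)^2\bigr),
\end{equation*}
so $\{u^\epsilon\}$ is Cauchy in $C([0,T], H^{\alpha/2}(\mathbb{R}))$.

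To close the argument, I would interpolate this Cauchy property against the uniform $L^\infty(H^r)$ bound to get convergence of $u^\epsilon$ in $C([0,T], H^{r'}(\mathbb{R}))$ for every $r' < r$. Choosing $r' > 3/2$ is sufficient to pass to the limit in the nonlinear term $\partial_x[(u^\epsilon)^{p+1}]$, while the viscous term disappears because $\|\epsilon u^\epsilon_{xx}\|_{H^{r-2}} \leq \epsilon \|u^\epsilon\|_{H^r} \to 0$, so the limit $u$ solves \eqref{gfBBM}--\eqref{incon}. The main obstacle I anticipate is upgrading $C([0,T], H^{r'})$ convergence to the full $C([0,T], H^r)$ convergence claimed in the statement; the standard device is a Bona--Smith type argument, in which one mollifies the initial data at a scale independent of $\epsilon$ to obtain extra smoothness, and then combines weak convergence in $H^r$ with convergence of the corresponding energy norms to conclude strong convergence at the top regularity.
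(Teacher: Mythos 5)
Your proposal follows the same route as the paper for the core of the argument: both form the difference $w=u^{\epsilon_1}-u^{\epsilon_2}$ of two regularized solutions, pair the difference equation with $w$ in $L^2$ so that the transport and fractional transport terms cancel and the time-derivative term produces $\tfrac{d}{dt}\|w\|_{H^{\alpha/2}}^2$, factor the nonlinearity as $w\sum_k (u^{\epsilon_1})^k(u^{\epsilon_2})^{p-k}$, bound the viscous term by $\epsilon\|u^\epsilon\|_{H^{2-\alpha/2}}\|w\|_{H^{\alpha/2}}$ (the paper reaches the same bound via H\"older and Sobolev embeddings rather than your $H^{\alpha/2}$--$H^{-\alpha/2}$ duality, and this is exactly where both arguments use $r\geq 2-\tfrac{\alpha}{2}$), and close with Gronwall to get a Cauchy sequence in $C([0,T],H^{\alpha/2})$. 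Up to that point the two arguments are interchangeable.

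Where you genuinely diverge is the endgame. The paper stops after the $H^{\alpha/2}$ Cauchy estimate and simply asserts that, since $u^\epsilon(t)$ is uniformly bounded in $H^r$ by Theorem~\ref{Thm1}, the sequence also converges \emph{weakly} in $H^r$ to the same limit; it does not address how weak convergence yields the uniform convergence in $C([0,T],H^r(\mathbb{R}))$ that the theorem actually claims. You correctly identify this as the main obstacle and propose the standard remedies: interpolation between the $H^{\alpha/2}$ Cauchy property and the uniform $H^r$ bound to get strong convergence in every $H^{r'}$ with $r'<r$ (enough to pass to the limit in the equation), followed by a Bona--Smith mollification argument to upgrade to the top regularity. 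Your version is therefore more complete than the paper's on precisely the point where the paper's proof is thinnest; the only thing to flag is that you should carry the Bona--Smith step out rather than merely announce it, since without it you, like the paper, only reach convergence below the stated regularity.
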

\begin{proof}
We show that $(u^{\epsilon}(t))_{\epsilon \geq 0}$ is a Cauchy sequence in $[0,T]$. Let $\epsilon, \delta \geq 0$ and $u^{\epsilon}, v^{\delta}$ be the respective solutions of the eqs. \eqref{reg1}-\eqref{regin}. The difference
$w=u^{\epsilon}-v^{\delta}$ satisfies
\begin{equation}
w_t+w_x+ \frac{3}{4} D^{\alpha} w_x + \frac{3}{4} D^{\alpha} w_t +\frac{1}{2} \big\{ [(u^{\epsilon})^{p+1}]_x -  [(v^{\delta})^{p+1}]_x \big\} = (\epsilon-\delta)u^{\epsilon}_{xx} +\delta w_{xx}. \label{dif}
\end{equation}
Multiplying both sides of the eq. \eqref{dif} by $w$ and then integrating on the whole line, we have
\begin{eqnarray}
&& \hspace*{-10pt} \frac{d}{dt} \int_{\mathbb{R}}[ w^2+(D^{\alpha /2} w)^2] dx   \nonumber \\
&&  = - \int_{\mathbb{R}} \Big\{   w \big[ (u^\epsilon)^p +(u^\epsilon)^{p-1}v^{\delta}+....+(v^\delta)^p  \big]  \Big\}_x w dx  + \int_{\mathbb{R}} \big[ 2(\epsilon-\delta) u^{\epsilon}_{xx} w -2\delta (w_x)^2  \big] dx   \nonumber \\
&& \leq   \int_{\mathbb{R}}     \big[ (u^\epsilon)^p +(u^\epsilon)^{p-1}v^{\delta}+....+(v^\delta)^p  \big]  w w_x dx  + \int_{\mathbb{R}} \big[ 2(\epsilon-\delta) u^{\epsilon}_{xx} w \big] dx  \nonumber \\
&& \leq \frac{1}{2} \int_{\mathbb{R}}  \mid   \big[ (u^\epsilon)^p +(u^\epsilon)^{p-1}v^{\delta}+....+(v^\delta)^p  \big]_x  w^2 \mid  dx  +  2(\epsilon-\delta) \int_{\mathbb{R}} \mid   u^{\epsilon}_{xx} w  \mid dx.  \nonumber \\
\end{eqnarray}
By using the H{\"o}lder's inequality,  the first term of RHS is estimated as
\begin{eqnarray}
 &&  \hspace*{-30pt} \int_{\mathbb{R}}  \mid \big[ (u^\epsilon)^p +(u^\epsilon)^{p-1}v^{\delta}+....+(v^\delta)^p  \big]_x  w^2 \mid dx
   \nonumber  \\
 && \hspace*{30pt} \leq C \| ~ [(u^\epsilon)^p +(u^\epsilon)^{p-1}v^{\delta}+....+(v^\delta)^p ]_x ~ \|_{L^{q_1}}
 \|w\|_{L^{q_2}}^2 \nonumber\\
  &&  \hspace*{30pt} \leq C \| (u^\epsilon)^p +(u^\epsilon)^{p-1}v^{\delta}+....+(v^\delta)^p  \|_{H^{s+\frac{\alpha}{2}}}  \|w\|^2_{H^{\frac{\alpha}{2}}}. \nonumber \\
\end{eqnarray}
Here, we have used the  Sobolev imbeddings \eqref{embedding1}. By Theorem \ref{Thm1}, $u^{\epsilon}$ and  $v^{\delta}$ are bounded. Then, we deduce that
\begin{equation}
 \int_{\mathbb{R}}  \mid \big[ (u^\epsilon)^p +(u^\epsilon)^{p-1}v^{\delta}+....+(v^\delta)^p  \big]_x  w^2 \mid dx   \leq  C \|w\|^2_{H^{\frac{\alpha}{2}}}. \label{es1}
   \nonumber
\end{equation}
The second term of RHS is estimated as
\begin{eqnarray}
\int_{\mathbb{R}} \mid   u^{\epsilon}_{xx} w  \mid dx & \leq &  \| u_{xx}^{\epsilon} \|_{L^{q_6}}   \|w\|_{L^{q_7}}  \nonumber \\
&\leq & \| u_{xx}^{\epsilon} \|_{H^{s+\frac{\alpha}{2}-2}}  \|w\|_{H^{\frac{\alpha}{2}}}  \nonumber \\
&\leq & \| u^{\epsilon} \|_{H^{s+\frac{\alpha}{2}}}  \|w\|_{H^{\frac{\alpha}{2}}} .
\end{eqnarray}
Here, we have used the following Sobolev imbeddings
\begin{eqnarray}
  H^{s+\frac{\alpha}{2}-2} & \hookrightarrow & L^{q_6},~~\mbox{for}~~{q_6}\leq \frac{ 2}{5-2s-\alpha}, \nonumber\\
  H^{\frac{\alpha}{2}} & \hookrightarrow & L^{q_7},~~\mbox{for}~~{q_7}\leq \frac{2}{1-\alpha},   \nonumber
\end{eqnarray}
where $\frac{1}{q_6} + \frac{1}{q_7} \leq 1$ implies that $s\geq 2-\alpha$. Since $r=s+\frac{\alpha}{2}$, we  obtain $r \geq 2 - \frac{\alpha}{2}$. By Theorem \ref{Thm1}, it follows that
\begin{equation}
\int_{\mathbb{R}} \mid   u^{\epsilon}_{xx} w  \mid dx  \leq  C \|w\|_{H^{\frac{\alpha}{2}}}.  \label{es2}
\end{equation}
Combining the estimates \eqref{es1} and \eqref{es2},  we get
\begin{equation}
\frac{d}{dt} \|w\|_{H^{\frac{\alpha}{2}}}^2 \leq C  \|w\|^2_{H^{\frac{\alpha}{2}}} +C (\epsilon-\delta) \|w\|_{H^{\frac{\alpha}{2}}} \nonumber
\end{equation}
The Gronwall lemma implies that $(u^{\epsilon}(t))_{\epsilon \geq 0}$ is a Cauchy sequence in the complete space $H^{\frac{\alpha}{2}} (\mathbb{R})$ and it converges to a limit $u(t)$. Moreover,
$u^{\epsilon}(t)$ is continuous with respect to time and uniformly bounded by Theorem \ref{Thm1}, the sequence $u^{\epsilon}(t)$ is also weakly convergent in  $H^{s+\frac{\alpha}{2}} (\mathbb{R})$
to the limit $u(t)$.
\end{proof}

\setcounter{equation}{0}
\section{Conserved Quantities}

In this section, we derive the conserved quantities of the gfBBM eq.
 \eqref{gfBBM} for the smooth enough solutions which tend to 0 as $x\rightarrow \mp \infty$.
  As \mbox{$\displaystyle (I+ \frac{5}{4}D^{\alpha})u=0$} implies that $u=0$ the operator $\displaystyle (I+ \frac{5}{4}D^{\alpha})$ is invertible. Thus the equation is rewritten in the conservative like form
\begin{equation} \label{gfBBM_v}
  u_{t} + \partial_x(I+ \frac{5}{4}D^{\alpha})^{-1} [u+\frac{1}{2}u^{p+1}+   \frac{3}{4}D^{\alpha} u]=0.
\end{equation}
Therefore, the first conserved integral  is
\begin{equation}\label{cq1}
  I_0= \int_{\mathbb{{R}}} u(x,t)dx.
\end{equation}
Multiplying the eq. \eqref{gfBBM} by $u$ and  integrating  on the whole line, we have
\begin{eqnarray}\label{int2_gfBBM}
  \int_{\mathbb{{R}}} (\frac{u^2}{2})_t dx+ \int_{\mathbb{{R}}}(\frac{u^2}{2})_x dx &+&
   \frac{1}{2} \int_{\mathbb{{R}}}  \frac{p+1}{p+2} (u^{p+2})_x dx \nonumber \\
   &+& \frac{3}{4}\int_{\mathbb{{R}}} u D^{\alpha} u_{x}  dx+ \frac{5}{4}\int_{\mathbb{{R}}} u D^{\alpha} u_{t}  dx =0. \nonumber \\
  \end{eqnarray}
Thanks to the Plancherel theorem, one can write
\begin{eqnarray} \label{planc1}
  \int_{\mathbb{{R}}} uD^{\alpha} u_{t} dx  &=& \int_{\mathbb{{R}}} |\xi|^{\alpha} \hat{u}_{t}(\xi,t)\overline{\hat{u}(\xi,t)} d \xi  , \nonumber \\
   &=&   \frac{1}{2}\frac{d}{dt} \int_{\mathbb{{R}}}
   |D^{\frac{\alpha}{2}} u|^2 dx.
\end{eqnarray}
Rewriting the eq.  \eqref{int2_gfBBM} in the form
\begin{equation*}
  \frac{1}{2} \frac{d}{dt} \int_{\mathbb{{R}}} (u^2 + \frac{5}{4}|D^{\frac{\alpha}{2}} u|^2) dx + \frac{1}{2} \int_{\mathbb{{R}}}(u^2+ \frac{p+1}{p+2} u^{p+2} +\frac{3}{4} |D^{\frac{\alpha}{2}} u |^2)_x dx =0
  \end{equation*}
allows us to write a second conserved integral
\begin{equation} \label{cq2}
  I_1= \int_{\mathbb{{R}}} (u^2 + \frac{5}{4}|D^{\frac{\alpha}{2}}u |^2) dx
\end{equation}
for any $\alpha$.  Therefore, the Cauchy problem for the eq. \eqref{gfBBM} admits a unique global weak solution in $L^\infty(\mathbb{R},H^{\frac{\alpha}{2}}(\mathbb{R}))$.

The equivalent form for  the eq. \eqref{gfBBM_v}  gives us the Hamiltonian formulation
\begin{equation*}
  \partial_t u + J_{\alpha} \nabla_u H (u)=0.
\end{equation*}
Here, skew-adjoint operator $J_{\alpha}$ is
\begin{equation*}\label{J}
   J_{\alpha}= \partial_x(I+ \frac{5}{4}D^{\alpha})^{-1},
\end{equation*}
and the Hamiltonian is
\begin{equation} \label{hamiltonian}
  H(u)=\frac{1}{2} \int_{\mathbb{{R}}} \big( u^2 + \frac{u^{p+2}}{p+2}+ \frac{3}{4} |D^{\frac{\alpha}{2}}u|^2 \big) dx.
\end{equation}
The Sobolev Imbedding  $H^{\alpha/2} \hookrightarrow  L^{p+2}$ yields that   $ H(u)$ is well-defined for  \mbox{$\displaystyle\alpha \geq \frac{p}{p+2}$}.
\setcounter{equation}{0}
\section{Solitary Wave Solutions}
To find the  localized solitary wave solutions of the eq. \eqref{gfBBM}, we use the ansatz $u(x,t)=Q_c(\xi), ~~\xi=x-ct$
with $\displaystyle{\lim_{|\xi| \rightarrow \infty} Q_c(\xi)=0}$ which leads to  the ordinary differential equation
\begin{equation*}
  -cQ_c' + Q_c' + \frac{1}{2}(Q_c^{p+1})' + \frac{3}{4} D^{\alpha} Q_c' -c\frac{5}{4}D^{\alpha} Q_c' =0.
\end{equation*}
Here $^\prime$ denotes the derivative with respect to $\xi$.
Integrating the above equation, we have
\begin{equation}\label{sw1}
  \big(\frac{5}{4}c-\frac{3}{4}  \big) D^{\alpha} Q_c + (c-1) Q_c - \frac{1}{2} (Q_c)^{p+1}=0.
\end{equation}
The following theorem shows the non-existence of the nontrivial solutions of the eq.  \eqref{sw1} for some values of $\alpha, p$ and $c$.
\begin{theorem} \label{Thm2}
 Assume that one of the following cases
  \begin{description}
    \item[i.] $c \in (\frac{3}{5},1)$ {and} $\alpha \geq \frac{p}{p+2}$,
    \item[ii.]  $c \notin (\frac{3}{5},1)$ {and} $\alpha \leq \frac{p}{p+2}$,
    \item[iii.] $c=\frac{3}{5}$ {or} $c=1$,
  \end{description}
is satisfied. Then, the eq. \eqref{sw1} does not admit any nontrivial solution $Q_c \in H^{\frac{\alpha}{2}}(\mathbb{R})\bigcap L^{p+2}(\mathbb{R})$.
\end{theorem}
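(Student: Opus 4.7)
The plan is a Pohozaev-type argument: I will derive two independent integral identities for any solution $Q_c \in H^{\alpha/2}(\mathbb{R})\cap L^{p+2}(\mathbb{R})$ of \eqref{sw1}, eliminate the nonlinear integral, and then read off non-existence from the resulting sign analysis. To streamline the bookkeeping I set $A:=\frac{5c-3}{4}$ and $B:=c-1$, so \eqref{sw1} becomes $A\, D^{\alpha} Q_c + B\, Q_c = \tfrac{1}{2}Q_c^{p+1}$. Note that $A>0 \Leftrightarrow c>\tfrac{3}{5}$, $B>0 \Leftrightarrow c>1$, and the quantity $\alpha(p+2)-p$ that will appear has the sign of $\alpha - \frac{p}{p+2}$.

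First I multiply \eqref{sw1} by $Q_c$ and integrate, using Plancherel as in \eqref{planc1} to convert $\int Q_c D^\alpha Q_c$ into $\|D^{\alpha/2}Q_c\|_{L^2}^2$. This yields the energy identity
\begin{equation*}
A\|D^{\alpha/2}Q_c\|_{L^2}^2 + B\|Q_c\|_{L^2}^2 \;=\; \tfrac{1}{2}\|Q_c\|_{L^{p+2}}^{p+2}. \tag{I}
\end{equation*}
For the Pohozaev identity I avoid the delicate multiplication by $xQ_c'$ and use a scaling argument. The action whose Euler--Lagrange equation is \eqref{sw1} is
$\mathcal{E}(Q)=\tfrac{A}{2}\|D^{\alpha/2}Q\|_{L^2}^2+\tfrac{B}{2}\|Q\|_{L^2}^2-\tfrac{1}{2(p+2)}\|Q\|_{L^{p+2}}^{p+2}$. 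Plugging in $Q_\lambda(x):=Q_c(\lambda x)$ gives explicit $\lambda$-homogeneities ($\lambda^{\alpha-1}$, $\lambda^{-1}$, $\lambda^{-1}$ for the three terms), and imposing $\frac{d}{d\lambda}\mathcal{E}(Q_\lambda)\big|_{\lambda=1}=0$ produces
\begin{equation*}
A(\alpha-1)\|D^{\alpha/2}Q_c\|_{L^2}^2 - B\|Q_c\|_{L^2}^2 + \tfrac{1}{p+2}\|Q_c\|_{L^{p+2}}^{p+2} = 0. \tag{II}
\end{equation*}
Eliminating $\|Q_c\|_{L^{p+2}}^{p+2}$ between (I) and (II) gives the master identity
\begin{equation*}
A\bigl[\alpha(p+2)-p\bigr]\|D^{\alpha/2}Q_c\|_{L^2}^2 \;=\; Bp\,\|Q_c\|_{L^2}^2. \tag{M}
\end{equation*}

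With (M) in hand the three cases fall out from sign inspection. In case (i), $A>0$, $B<0$, and $\alpha(p+2)-p\geq 0$, so the left side of (M) is nonnegative while the right side is nonpositive; both sides must vanish, forcing $Q_c\equiv 0$. In case (ii) I split on $c>1$ (both $A,B>0$, and $\alpha(p+2)-p\leq 0$) and $c<\tfrac{3}{5}$ (both $A,B<0$); in each subcase the two sides of (M) carry opposite signs, again forcing $Q_c\equiv 0$. Case (iii) is handled separately because one of $A,B$ vanishes: when $c=\tfrac{3}{5}$ the operator $A\,D^\alpha$ drops out of \eqref{sw1} and the equation becomes the algebraic pointwise relation $-\tfrac{2}{5}Q_c=\tfrac{1}{2}Q_c^{p+1}$, whose only solution decaying at infinity is $Q_c\equiv 0$; when $c=1$ identity (I) reduces to $\tfrac{1}{2}\|D^{\alpha/2}Q_c\|_{L^2}^2=\tfrac{1}{2}\|Q_c\|_{L^{p+2}}^{p+2}$ while (M) reduces to $\tfrac{1}{2}[\alpha(p+2)-p]\|D^{\alpha/2}Q_c\|_{L^2}^2=0$, which kills $Q_c$ away from the critical exponent $\alpha=p/(p+2)$; at that critical exponent a Fourier evaluation of \eqref{sw1} at $\xi=0$ yields $\int Q_c^{p+1}\,dx=0$, which closes the argument together with (I).

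The main obstacle is justifying (II) at the low regularity $Q_c\in H^{\alpha/2}\cap L^{p+2}$, where a direct Pohozaev computation via $xQ_c'$ is hazardous for the nonlocal operator $D^\alpha$. The scaling derivation above sidesteps this because it only requires absolute convergence of the three integrals composing $\mathcal{E}$, which is built into the hypotheses; differentiation in $\lambda$ can then be carried out on these finite scalar functions of $\lambda$. A secondary subtlety is the borderline $\alpha=p/(p+2)$ inside case (iii) with $c=1$, where the master identity degenerates and one genuinely needs the auxiliary Fourier/zero-mean observation to conclude.
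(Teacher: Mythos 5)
Your argument is correct in substance and follows the same route as the paper: the energy identity (I) is exactly the paper's identity obtained by multiplying \eqref{sw1} by $Q_c$, your (II) is exactly the paper's Pohozaev identity \eqref{pi}, and your master identity (M) is the paper's \eqref{pohozaevlast} written without dividing by $(5c-3)[\alpha(p+2)-p]$ --- a form that is actually preferable, since it remains meaningful in the degenerate cases. Two remarks. First, the Derrick-scaling derivation of (II) is only cosmetically different from the paper's multiplication by $xQ_c'$: the step $\frac{d}{d\lambda}\mathcal{E}(Q_\lambda)\big\vert_{\lambda=1}=0$ implicitly pairs the Euler--Lagrange operator with $\partial_\lambda Q_\lambda\vert_{\lambda=1}=xQ_c'$, so it does not really sidestep the low-regularity justification; it is at the same level of rigor as the paper (which simply invokes \eqref{id3} from the literature). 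Second, you correctly notice a degenerate sub-case that the paper glosses over: for $c=1$ and $\alpha=p/(p+2)$ the identity (M) is vacuous (the paper's \eqref{pohozaevlast} involves a division by zero there). Your patch --- evaluating the Fourier transform of $\frac{1}{2}D^{\alpha}Q_c=\frac{1}{2}Q_c^{p+1}$ at $\xi=0$ to get $\int Q_c^{p+1}\,dx=0$ --- closes the argument when $p$ is odd (then $p+1$ is even and the integrand is $|Q_c|^{p+1}$), but for even $p$ a vanishing integral of $Q_c^{p+1}$ does not force $Q_c\equiv 0$, and (I) gives no further information since the $L^2$ term has coefficient $c-1=0$. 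Indeed, at these parameters the equation is the scaling-critical fractional Lane--Emden equation, which admits nontrivial bubble solutions in $\dot H^{\alpha/2}\cap L^{p+2}$ (they fail to lie in $L^2$, so the theorem is not contradicted, but ruling out $L^2$ solutions requires input beyond the two identities). So your case (iii) for $c=1$ is incomplete when $\alpha=p/(p+2)$ and $p$ is even; this is a gap you share with, and partially improve upon, the paper's own proof. All remaining cases (i), (ii), and $c=3/5$ are handled correctly by your sign analysis of (M).
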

\textbf{Proof:} Let $Q_c$ be any nontrivial solution of the eq. \eqref{sw1} in the class $H^{\frac{\alpha}{2}}(\mathbb{R})\bigcap L^{p+2}(\mathbb{R})$. Multiplying the eq. \eqref{sw1} by $Q_c$ and integrating on $\mathbb{R}$, we get
\begin{equation}
  \big(\frac{5}{4} c-\frac{3}{4}  \big) \int_{\mathbb{{R}}} Q_c D^{\alpha} Q_cdx + (c-1) \int_{\mathbb{{R}}} Q_c^2 dx = \frac{1}{2} \int_{\mathbb{{R}}} Q_c^{p+2} dx. \label{ph1}
\end{equation}
By  using the Plancherel's formula
 \begin{eqnarray*}
  \int_{\mathbb{{R}}} Q_cD^{\alpha} Q_c dx &=& \int_{\mathbb{{R}}} |\xi|^{\alpha} \hat{Q_c}(\xi) \overline{\hat{Q_c}(\xi)} d \xi, \\
    &=& \int_{\mathbb{{R}}} |D^{\frac{\alpha}{2}} Q_c|^2 dx,
 \end{eqnarray*}
 \eqref{ph1} becomes
\begin{equation}\label{energy1}
  \big(\frac{5}{4} c-\frac{3}{4} \big) \int_{\mathbb{{R}}} |D^{\frac{\alpha}{2}} Q_c|^2 dx + (c-1)\int_{\mathbb{{R}}} Q_c^2 dx = \frac{1}{2} \int_{\mathbb{{R}}} Q_c^{p+2} dx.
\end{equation}
On the other hand, multiplying  the eq. \eqref{sw1} by $x Q_c'$ and integrating over $\mathbb{R}$,  we have
\begin{equation} \label{sw2}
  \big(\frac{5}{4} c-\frac{3}{4} \big) \int_{\mathbb{{R}}} x Q_c' D^{\alpha} Q_c dx + (c-1)\int_{\mathbb{{R}}} x Q_c' Q_c dx - \frac{1}{2} \int_{\mathbb{{R}}} x Q_c' Q_c^{p+1} dx =0.
\end{equation}

\noindent
Later, the equality
\begin{equation} \label{id3}
  \int_{\mathbb{{R}}} x Q_c' D^{\alpha} Q_c dx = \frac{\alpha-1}{2} \int_{\mathbb{{R}}} |D^{\frac{\alpha}{2}}Q_c|^2 dx,
\end{equation}
in \cite{linares} and several integration by parts
turns the eq. \eqref{sw2} into the Pohozaev type identity
\begin{equation} \label{pi}
  \big(\frac{5}{4}c-\frac{3}{4}\big)\frac{1-\alpha}{2} \int_{\mathbb{{R}}}  |D^{\frac{\alpha}{2}} Q_c|^2 dx + \frac{c-1}{2}\int_{\mathbb{{R}}}  Q_c^2 dx - \frac{1}{2(p+2)} \int_{\mathbb{{R}}} Q_c^{p+2} dx =0.
\end{equation}
Finally, by gathering  the eqs. \eqref{energy1} and  \eqref{pi}, we obtain
\begin{equation}\label{pohozaevlast}
\int_{\mathbb{{R}}}  |D^{\frac{\alpha}{2}} Q_c|^2 dx=\frac{4p(c-1)}{(5c-3 ) [\alpha(p+2)-p  ] }\int_{\mathbb{{R}}}Q_c^2dx
\end{equation}
and the  proof of items (\textbf{i}) and (\textbf{ii}) of the theorem follows directly from the positivity of the left hand integral. To prove item (\textbf{iii}) we first set $c=1$ in \eqref{pohozaevlast} which gives $D^{\frac{\alpha}{2}} Q_c=0$ and then the assumption $\displaystyle{\lim_{|x| \rightarrow \infty} Q_c(x)=0}$ shows that the solution is trivial. Lastly, if we rewrite \eqref{pohozaevlast} as
  \begin{equation*}
\frac{(5c-3 ) [\alpha(p+2)-p  ] }{4p(c-1)}\int_{\mathbb{{R}}}  |D^{\frac{\alpha}{2}} Q_c|^2 dx=\int_{\mathbb{{R}}}Q_c^2dx
\end{equation*}
and set $c=3/5$  we directly have that $Q_c=0$, which completes the proof.

\noindent Combining the results of Theorem 4.1 and  the condition $\displaystyle \alpha \geq \frac{p}{p+2}$ which ensures that the Hamiltonian \eqref{hamiltonian} is well-posed for $0< \alpha < 1$ we conclude that in order to have a non-trivial solution we must have $\displaystyle \alpha > \frac{p}{p+2}$ and $c<3/5$ or $c>1$.

\noindent On the other hand if we assume that the solution $Q_c$ is positive and $c<3/5$ then the eq. \eqref{ph1}
 gives a contradiction while the RHS of the equation is positive and the LHS is negative. Therefore we are able to say that the eq. \eqref{sw1} has no positive solitary wave solutions unless $c>1$.


\par In order to show the existence and uniqueness of the solitary wave solutions, we recall the results of Frank and Lenzmann \cite{franklenzmann}.
\begin{definition}\label{groundstates} (Definition 2.1 of \cite{franklenzmann}, Definition 1.1 of \cite{pava}) Let $Q\in H^{\alpha/2}(\mathbb{R})$ be an even and positive solution of the equation
\begin{equation}\label{ground}
D^{\alpha} Q+Q-Q^{p+1}=0.
\end{equation}
If
\begin{equation}
J^{\alpha,p}(Q)=\inf \{J^{\alpha,p}(u)|u\in H^{\alpha/2}(\mathbb{R}) \backslash\{0\}\}
\end{equation}
then $Q\in H^{\alpha/2}(\mathbb{R})$ is a ground state solution of the  eq. \eqref{ground} where $J^{\alpha,p}$ is the Weinstein functional defined by
\begin{equation*}
J^{\alpha,p}(u)=\left(\int_{\mathbb{R}} |u|^{p+2} dx  \right)^{-1}
\left(\int_{\mathbb{R}} |D^{\alpha/2}u|^{2}  dx  \right)^{p/2\alpha}
\left(\int_{\mathbb{R}}  |u|^{2} dx  \right)^{p(\alpha-1)/2\alpha+1}.
\end{equation*}
\end{definition}

\noindent
 The scaling
\begin{equation*}
Q_c(\xi)=(2(c-1))^{1/p}Q\left(\left( \frac{4(c-1)}{5c-3}  \right)^{1/\alpha}   \xi \right)
\end{equation*}

\noindent
converts  the eq. \eqref{sw1} into the eq. \eqref{ground}.  In Proposition $1.1$ and Theorem $2.4$ of \cite{franklenzmann}, Frank and Lenzmann prove the existence and uniqueness of the positive ground state solutions of
the  eq. \eqref{ground}  when $0<\alpha<2$ and  $0<p<p_{max}$ holds, where the critical exponent is defined as
\begin{equation}\label{pcond}
p_{\max}(\alpha)=\left\{ \begin{array}{cc}
\frac{2\alpha}{1-\alpha}, &\mbox{for}~~ 0< \alpha < 1  \\
\infty, &\mbox{for}~~ 1\leq \alpha < 2.   \end{array} \right.
\end{equation}

\noindent
Therefore, the eq. \eqref{sw1} has a unique positive ground state solution \mbox{$Q_c\in H^{\alpha/2}(\mathbb{R})$} for $c>1$ and $\displaystyle \alpha > \frac{p}{p+2}$. That is why in the following section, we choose the parameters
$c, \alpha$  and $p$  satisfying these  conditions to obtain positive solitary wave solutions, numerically.

\setcounter{equation}{0}
\section{Numerical results for gfBBM Equation}

In this section, we discuss the numerical solutions of the gfBBM equation. 
Since we do not know the analytical solitary wave  solutions of the eq. \eqref{gfBBM} for any $\alpha \in (0,1)$,  we first use the Petviashvili's iteration  method  \cite{pelinovsky, petviashvili, yang,uyen} to construct the solitary wave solution numerically. The solitary wave solution of the gfBBM equation satisfies the eq.  \eqref{sw1}. Applying   the Fourier transform to the eq. \eqref{sw1} yields
\begin{equation*}
  \widehat{Q_c}(k)[(\frac{5c}{4}-\frac{3}{4})|k|^{\alpha}+ c-1]=\frac{1}{2} \widehat{Q_c^{p+1}}(k).
\end{equation*}
We propose standard iterative algorithm in the form
\begin{equation}\label{pm1}
   \widehat{Q}_{n+1}(k)=\frac{1}{2[(\displaystyle\frac{5c}{4}-\frac{3}{4})|k|^{\alpha} + c-1]} \widehat{Q_{n}^{p+1}}(k),
\end{equation}
where $Q$ is used instead of $Q_c$ for simplicity. The condition $c>1$ guarantees  the non-resonance condition $(\displaystyle\frac{5c}{4}-\frac{3}{4})|k|^{\alpha} + c-1\neq0$ for any $k\in \mathbb{R}$.
The main idea in the Petviashvili method is to add a stabilizing factor into the fixed-point iteration in order to  prevent  the iterated solution to converge to zero solution or diverge.
Then, the Petviashvili method  for the gfBBM eq. is given by
\begin{equation}
   \widehat{Q}_{n+1}(k)=\frac{M_n^{\nu}}{2[(\displaystyle\frac{5c}{4}-\frac{3}{4})|k|^{\alpha} +c-1]}\widehat{Q_{n}^{p+1}}(k) \label{scheme}
\end{equation}
with stabilizing factor
\begin{equation*}
  M_n^{\nu}=\frac{\int_{\mathbb{R}} [(\displaystyle\frac{5c}{4}-\frac{3}{4})|k|^{\alpha} +c-1] [\widehat{Q}_{n}(k)]^2 dk }{\int_{\mathbb{R}}\displaystyle\frac{1}{2} \widehat{Q}^{p+1}_{n}(k) \widehat{Q}_{n}(k)dk },
\end{equation*}
for some parameter $\nu$. Pelinovsky et. al.  in  \cite{pelinovsky} showed that the fastest convergence occurs when $\nu=({p+1})/{p}$. Therefore to reduce the CPU time, we use  $\nu=({p+1})/{p}$ for the rest of the paper.
The Fourier pseudo-spectral method  is  employed  to implement the scheme \eqref{scheme}. The MATLAB functions \enquote{fft} and \enquote{ifft} compute the discrete Fourier transform and its inverse for any function $f(x)$ by using efficient Fast Fourier Transform.  We note that the Petviashvili iteration method can  also be used  for approximating  the periodic waves \cite{uyen}.

\par
Next, we investigate time evolution of the numerically generated solitary waves by using a numerical scheme combining a Fourier pseudo-spectral method for space and a fourth order Runge-Kutta method for the time integration.
Since the fractional derivative in the gfBBM equation is defined by a Fourier multiplier, the Fourier spectral method will be the most appropriate method for investigating the evolution of the solution in time.  We assume that $u(x,t)$ has periodic boundary condition $u(-L,t)=u(L,t)$ on the truncated domain $(x,t) \in [-L, L] \times [0,T]$.

First, the spatial period is normalized from finite interval $x\in[-L,L]$  to $X \in [0,2\pi]$ using the transformation \mbox{$X=\pi(x+L)/L$} in order to use the MATLAB functions \enquote{fft} and \enquote{ifft}.  In this case, the eq. \eqref{gfBBM} becomes
\begin{equation} \label{gfBBMX}
   \left[{\cal I}+ \frac{5}{4} (\frac{\pi}{L})^{\alpha} D^{\alpha}\right] u_t = -\frac{\pi}{L} u_X -  \frac{\pi}{2L} (u^{p+1})_X- \frac{3}{4} (\frac{\pi}{L})^{\alpha+1}  D^{\alpha} u_{X}.
\end{equation}
For the discretization of \eqref{gfBBMX} the interval $[0,2\pi]$ is divided into $N$ equal subintervals with grid spacing $\Delta X=2\pi/N$, where the integer $N$ is even. The spatial grid points are given by $X_{j}=2\pi j/N$,  $j=0,1,2,...,N$. The time interval $[0,T]$ is divided into $M$ equal subintervals with time step $\Delta t$. The temporal grip points are shown by $t_m=\frac{mT}{M}$,~ $m=0,...,M$.
The discrete Fourier transform of the sequence $\{U_{j} \}$, i.e.
\begin{equation}\label{dft}
  \widetilde{U}_{k}={\cal F}_{k}[U_{j}]=
          \frac{1}{N}\sum_{j=0}^{N-1}U_{j}e^{-ik X_{j}},
           ~~~~-N/2 \le k \le N/2-1
\end{equation}
gives the corresponding Fourier coefficients. Likewise, $\{U_{j} \}$ can be recovered from the Fourier coefficients by the inversion formula for the discrete Fourier transform (\ref{dft}), as follows:
\begin{equation}\label{invdft}
  U_{j}={\cal F}^{-1}_{j}[\widetilde{U}_{k}]=
          \sum_{\xi =-\frac{N}{2}}^{\frac{N}{2}-1}\widetilde{U}_{k}e^{i k X_{j}},
          ~~~~j=0,1,2,...,N-1~.
\end{equation}
Here $\cal F$ denotes the discrete Fourier transform and
${\cal F}^{-1}$ its inverse. Applying the discrete Fourier transform to the eq. \eqref{gfBBMX},  we get the ordinary differential equation given by
\begin{equation}
(\widetilde{U}_k)_t= \frac{\displaystyle -\frac{\pi}{L}ik \widetilde{U}_k-\frac{\pi}{2L}ik \widetilde{(U^{p+1})}_k-\frac{3}{4}(\frac{\pi}{L})^{\alpha+1}ik |k|^\alpha \widetilde{U}_k }{\displaystyle 1+\frac{5}{4} (\frac{\pi}{L})^{\alpha}|k|^\alpha}.
\end{equation}
We then use the fourth order Runge-Kutta method to solve the resulting ODE
in time. Finally, we find the approximate solution by using the inverse discrete Fourier transform.

\subsection{Numerical Generation of Solitary Waves}
\par A solitary wave solution $u(x,t)=Q_c(x-ct)$  of the equation
\begin{equation}
u_t+u u_x-D^\alpha u_x=0
\end{equation}
satisfies the ODE
\begin{equation}
D^\alpha Q_c+cQ_c-\frac{1}{2}Q_c^2=0. \label{cozum}
\end{equation}
The eq. \eqref{cozum} has the solution
\begin{equation}
Q_c(x,t)=\frac{4c}{1+c^2(x-ct)^2}
\end{equation}
for $\alpha=1$ in \cite{kleinfBBM}. By the convenient scaling,  the exact solitary wave solution of gfBBM equation can be written as
\begin{equation}\label{tws}
  Q_{exact}(x,t) = \frac{4(c-1)}{1+\big[\frac{4(c-1)}{5c-3}\big]^2 (x-ct)^2}
\end{equation}
for  $\alpha=1$ and $p=1$.

\noindent
In the first numerical experiment, we test our scheme by comparing the numerical result with the exact solution.
The space interval and number of grid points are chosen as $x \in [-2048,2048] $ and $N=2^{18}$, respectively.
The overall iterative process is  controlled by the  error,
\begin{equation}
  Error(n)=\|Q_n-Q_{n-1}\|,~~~~n=0,1,.... \nonumber
\end{equation}
 between two consecutive iterations defined with the  number of iterations,  the stabilization factor error
\begin{equation}
|1-M_n|, ~~~~n=0,1,.... \nonumber
\end{equation}
and the residual error
\begin{equation}
{RES(n)}= \|{\cal S} Q_n\|_\infty, ~~~~n=0,1,.... \nonumber
\end{equation}
where
\begin{equation}
{\cal S}Q= \big(\frac{3}{4}-  \frac{5c}{4} \big) D^{\alpha} Q - (1-c) Q - \frac{1}{2} Q^{p+1}.
\end{equation}

\noindent
In Figure $1$, we present the difference between the obtained numerical  and exact solitary wave solution and
the variation of    three different  errors
with the  number of iterations in semi-log scale. As it is seen from the Figure $1$, our proposed numerical scheme captures the solution remarkably well.
\noindent

\begin{figure}[ht]
 \begin{minipage}[t]{0.45\linewidth}
   \includegraphics[width=3.1in]{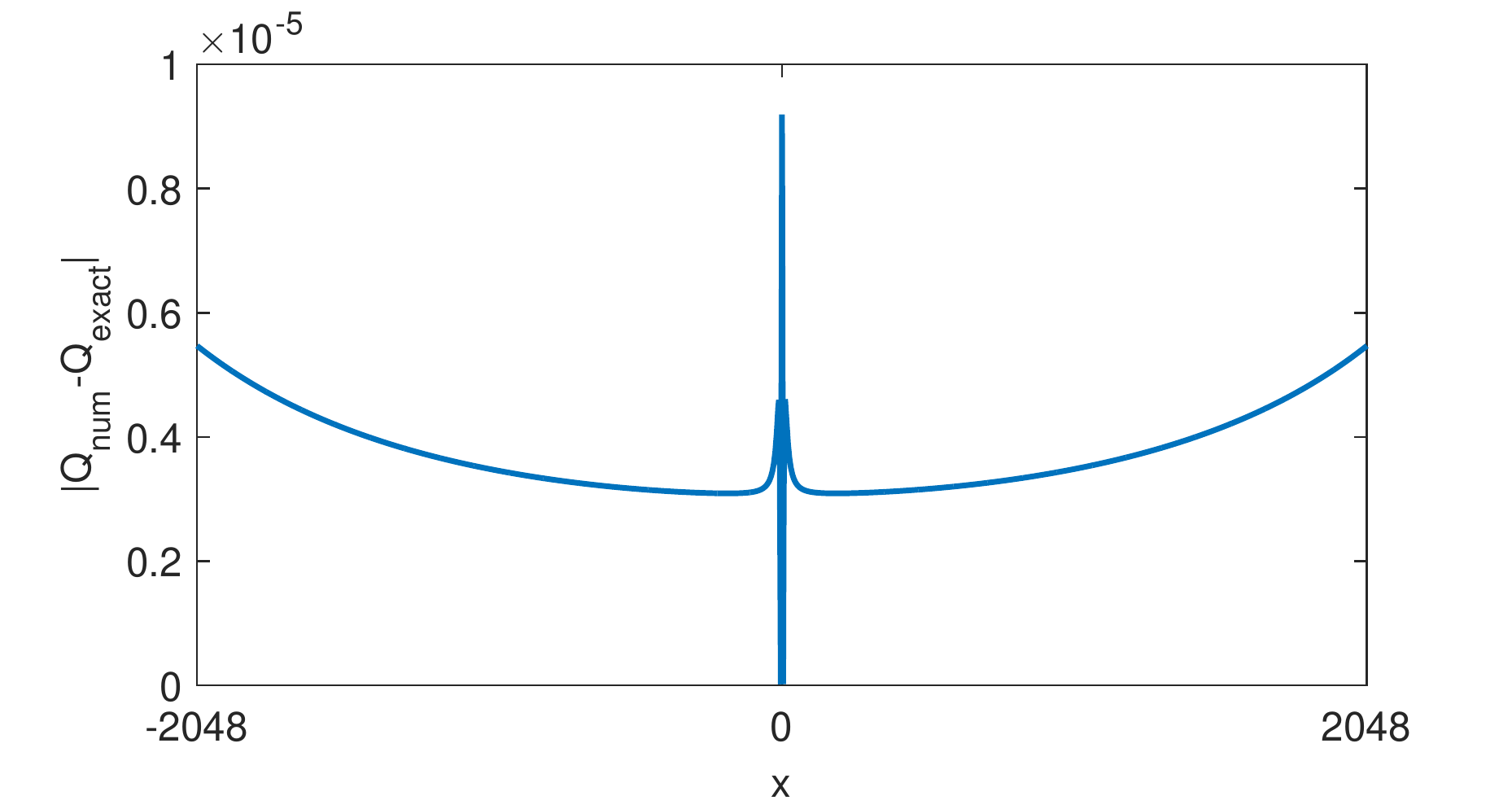}
 \end{minipage}
\hspace{30pt}
\begin{minipage}[t]{0.45\linewidth}
   \includegraphics[width=3in]{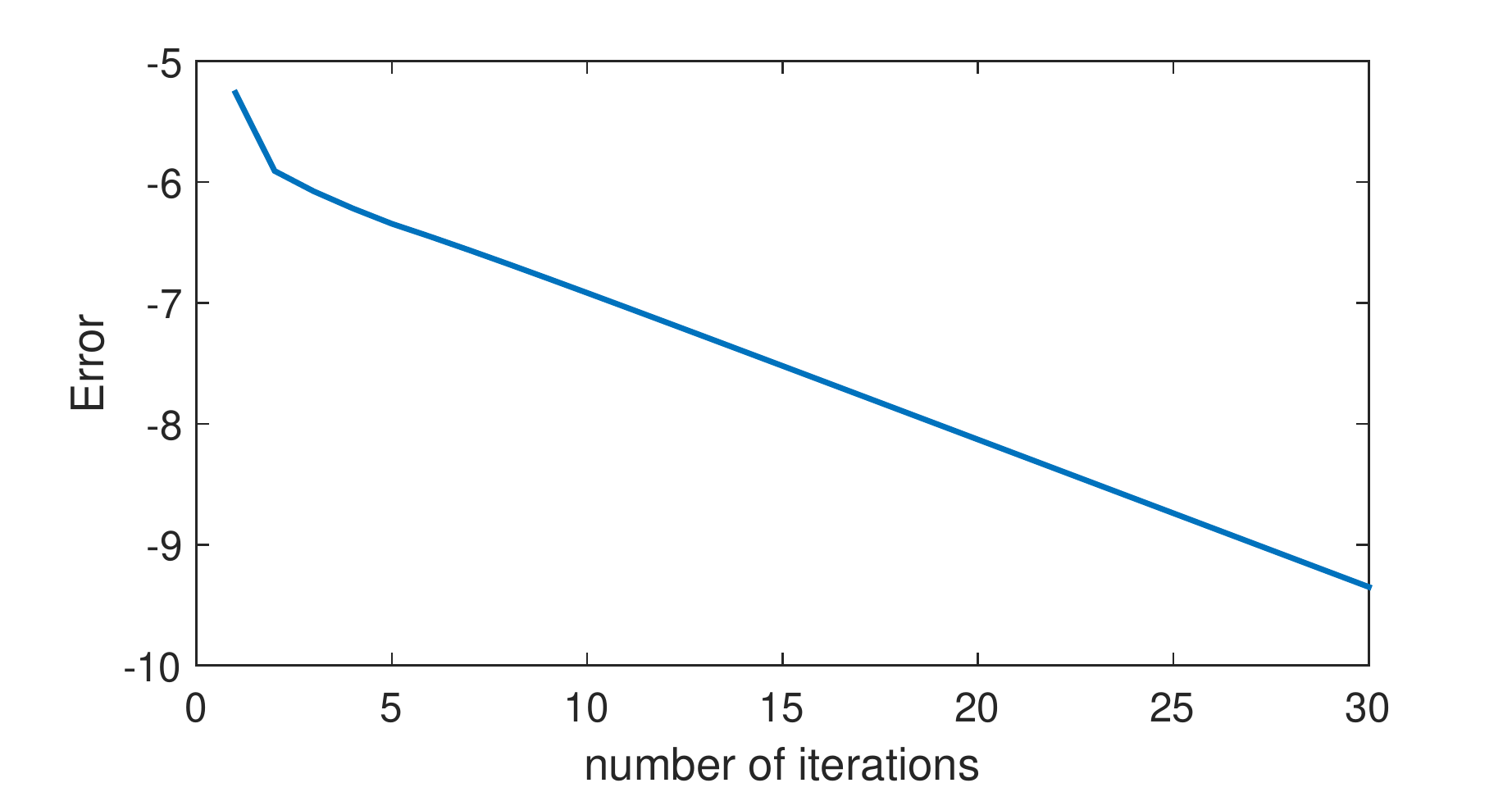}
 \end{minipage}
 \hspace{30pt}
\begin{minipage}[t]{0.45\linewidth}
   \includegraphics[width=3in]{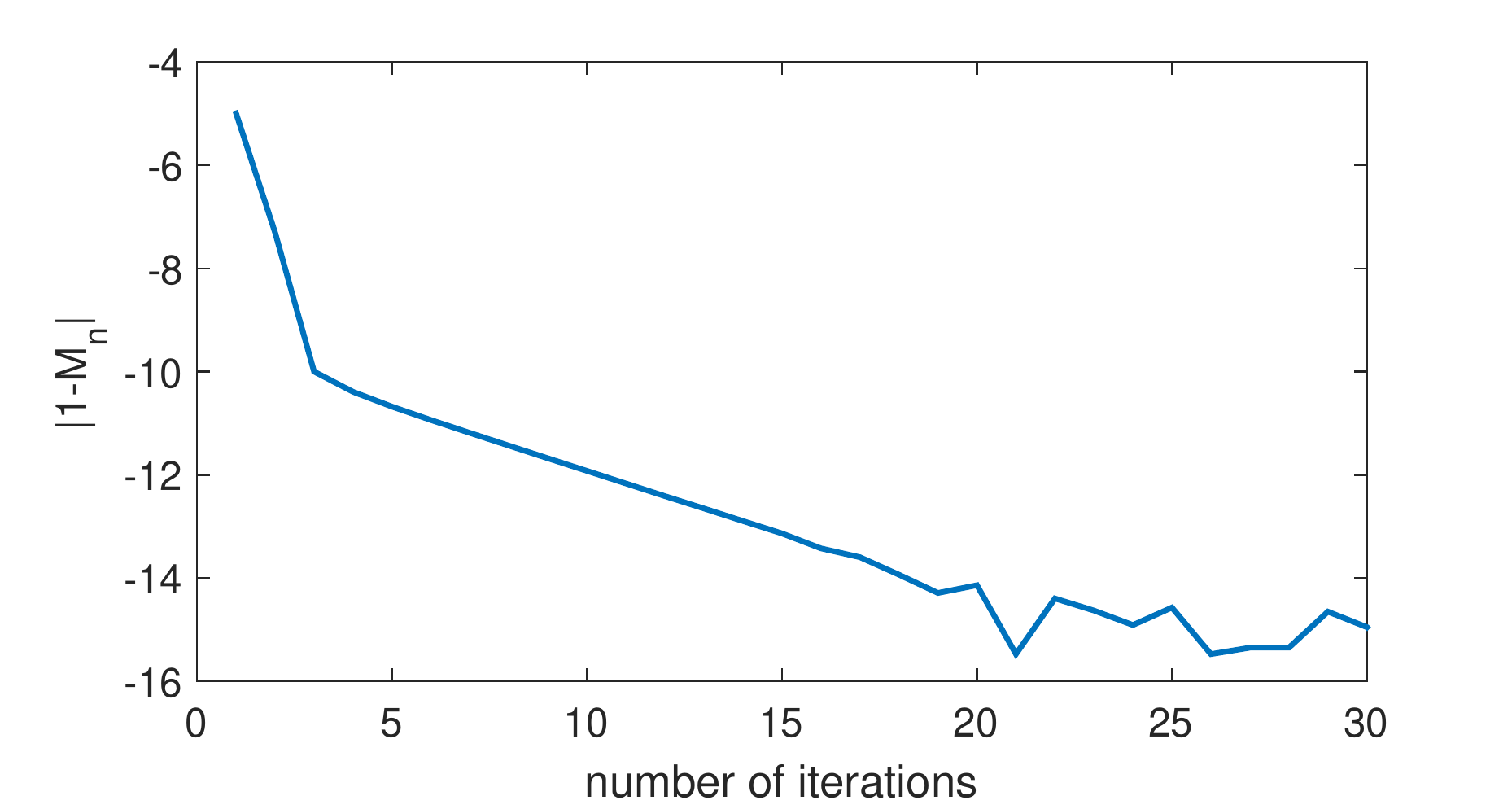}
 \end{minipage}
 \hspace{30pt}
\begin{minipage}[t]{0.45\linewidth}
   \includegraphics[width=3in]{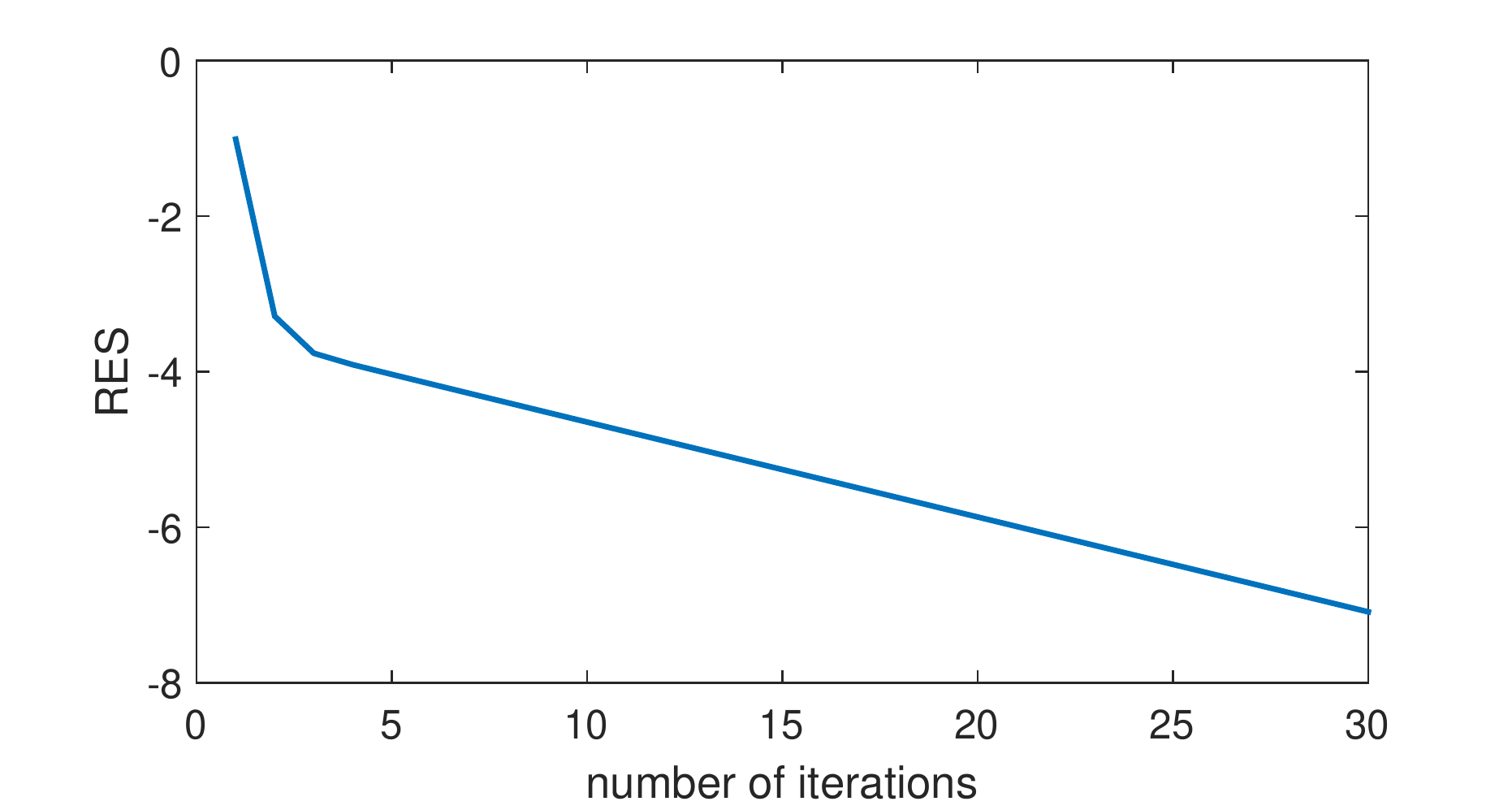}
 \end{minipage}
  \caption{Difference between the exact and the numerical solutions for  $\alpha=1$, $p=1$ and  the variation of the $Error(n)$, $|1-M_n|$ and $RES$ with the number of iterations in semi-log scale.}
 \label{pdif}
\end{figure}

\par In order to  understand the effects of the fractional dispersion, we illustrate the solitary wave profiles generated by Petviashvili's  iteration  method for various values of $\alpha$ and for $c=1.1,~p=1$  in Figure $2$. We observe that the solution becomes more and more peaked with decreasing values of $\alpha$.

\begin{figure}[!htbp]
 \centering
 \includegraphics[width=3.3in]{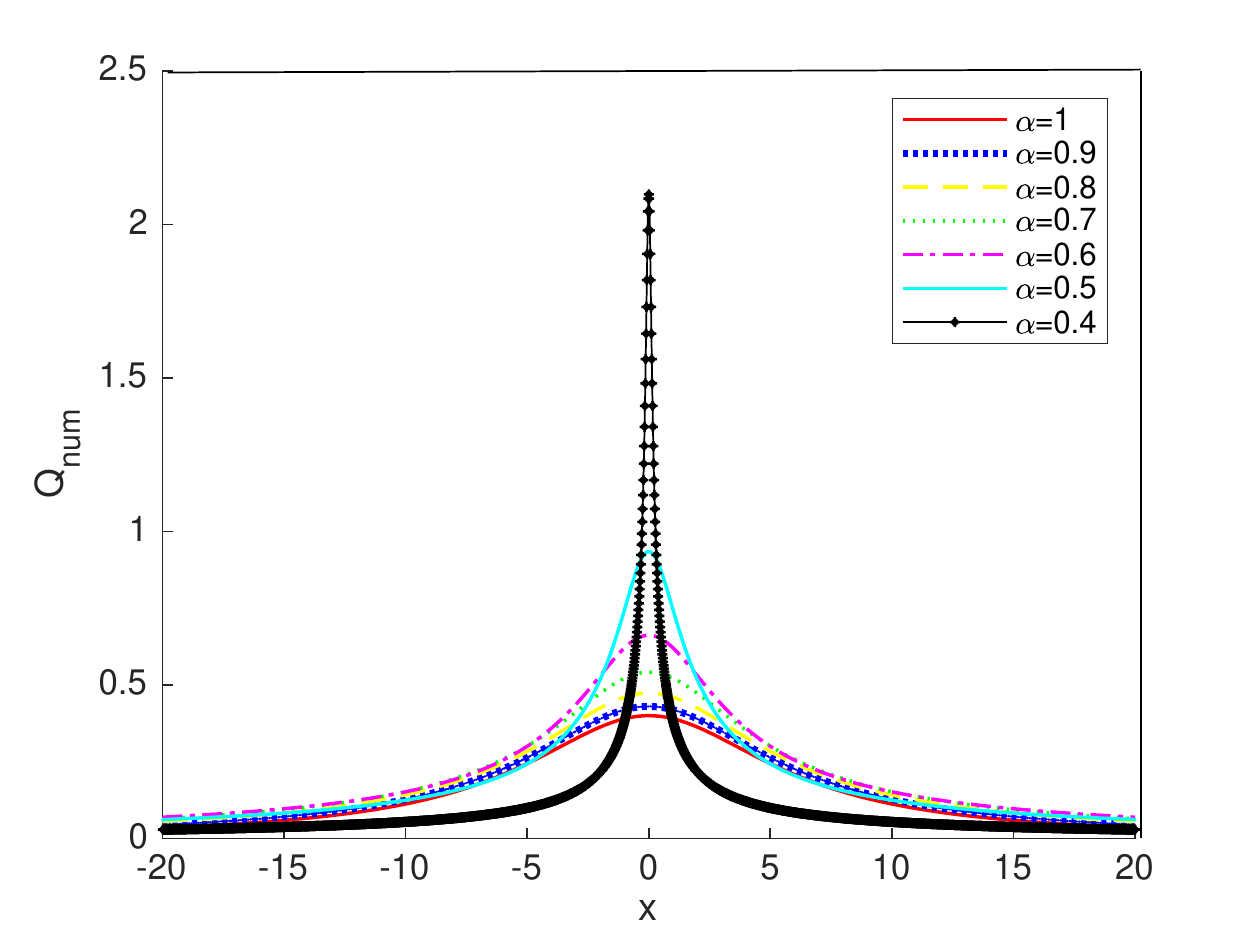}
  \caption{Solitary wave profiles  for various  values of $\alpha$ ($p=1$, $c=1.1$).}
\label{difalpha}
\end{figure}

\noindent
Since we do not know the exact solitary wave solution for different values of $\alpha$, we cannot compare the numerical solution with the exact solution.
Therefore, the iteration, stabilization factor and the residual errors are depicted   in Figure $3$, respectively, for $\alpha=0.6$ and $\alpha=0.8$.
These results show that the solitary wave solution generated by Petviashvili's method converges rapidly to the accurate solution.

\begin{figure}
\vspace{-2cm}
 \begin{minipage}[t]{0.4\linewidth}
   \includegraphics[width=2.7in]{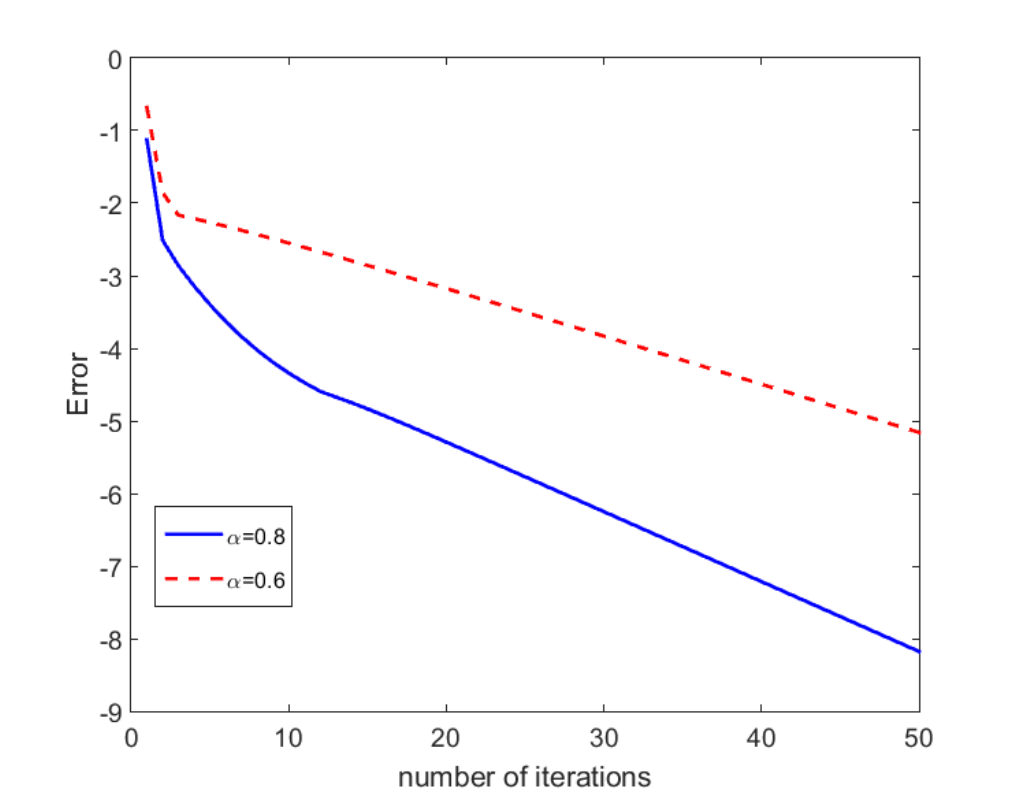}
 \end{minipage}
 \hspace{30pt}
 \begin{minipage}[t]{0.4\linewidth}
   \includegraphics[width=2.7in]{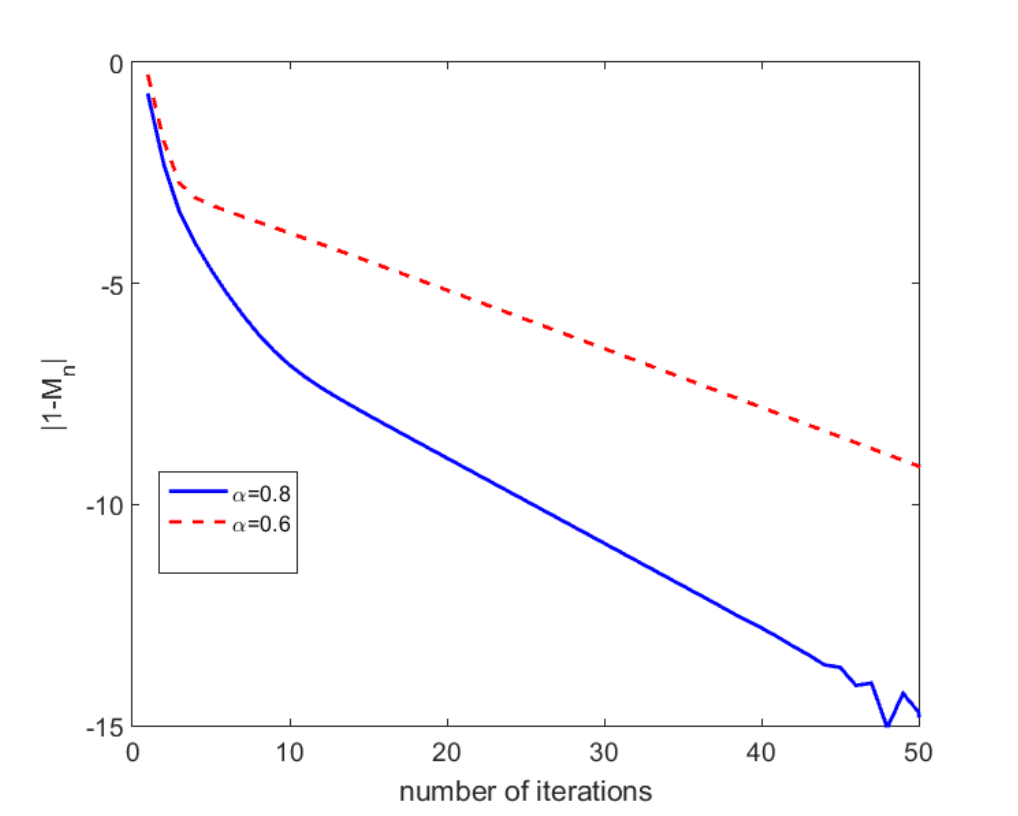}
 \end{minipage}
  \hspace{30pt}
  \begin{center}
 \begin{minipage}[t]{0.4\linewidth}
   \includegraphics[width=2.7in]{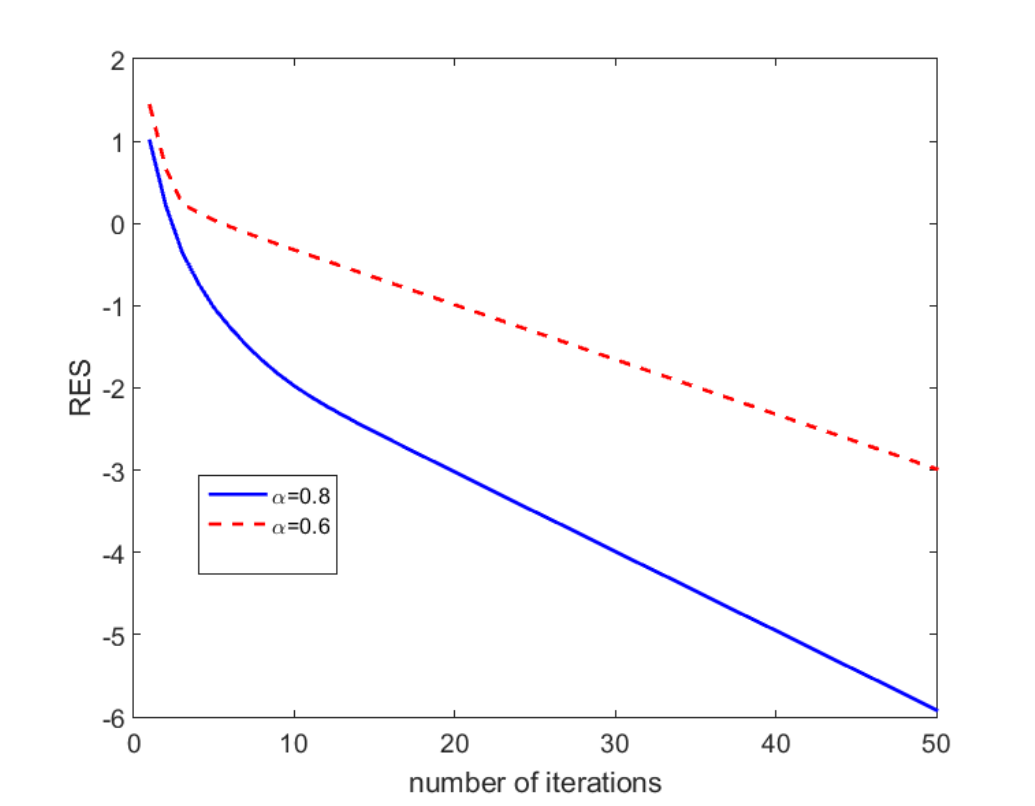}
 \end{minipage}
 \end{center}
  \caption{ The variation of the iteration, stabilization factor and the residual errors with the number of iterations in the semi-log scale ($c=1.1$, $p=1$).}
\label{errorsvsni}
\end{figure}

\begin{figure}
 \centering
 \includegraphics[width=3.3in]{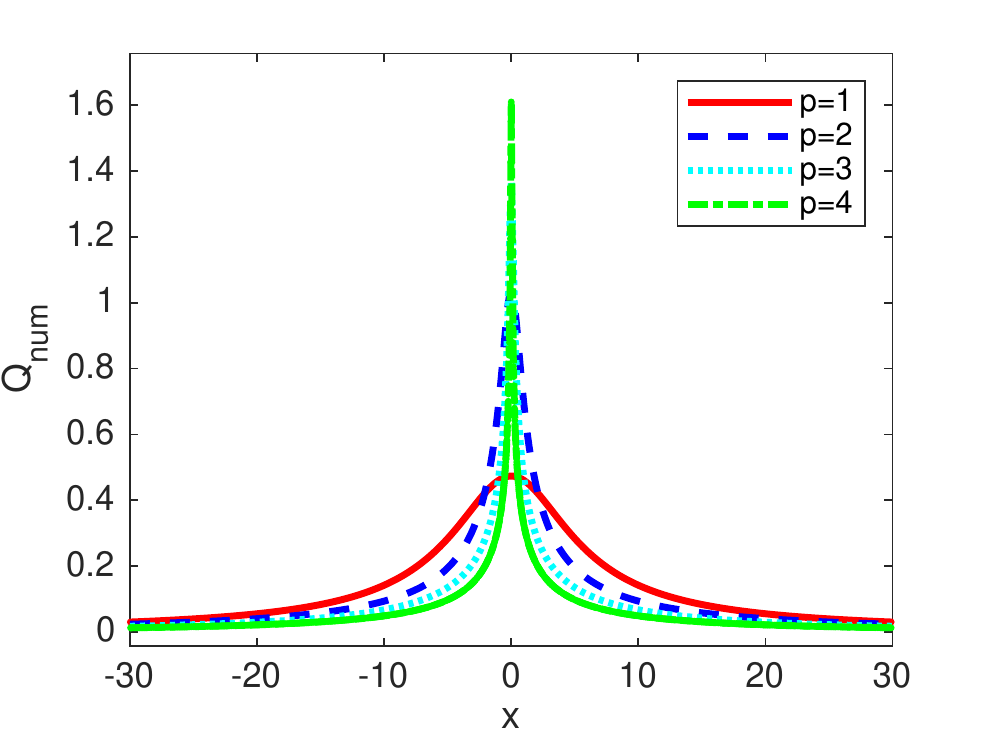}
  \caption{Solitary wave profiles  for various nonlinearities ($\alpha=0.8$, $c=1.1$).}
 \label{pdif}
\end{figure}

\noindent
In order to  understand the effects of the nonlinearity,    we present the solitary   wave profiles generated by Petviashvili's  iteration  method for various nonlinearities with  fixed $\alpha=0.8$ in Figure $4$. This numerical result agrees well with the fact that the wave steepens with increasing nonlinear effects.

In the next experiment we investigate the speed-amplitude relation. We illustrate the  variation of the amplitude with the speed parameter for various values of $p$ and the fixed $\alpha=0.8$ and various values of $\alpha$ and fixed nonlinearity $p=1$ in Figure $5$. We observe that the amplitude is increasing with the increasing values of speed, as expected for the solitary waves.
As it is seen from the left panel of Figure $5$, there is a critical speed $c_s$ near to $1.5$.  For a fixed value of $\alpha$ and speed  $(c < c_s)$, the amplitude increases  with  increasing nonlinearity. But, for a fixed value of  $\alpha$ and  speed $(c > c_s)$, the amplitude decreases with increasing nonlinearity. The right panel of Figure $5$ illustrates that the amplitude increases with decreasing values of $\alpha$ for a fixed value of nonlinearity $p=1$ and speed.

\begin{figure}[h!bt]
 \begin{minipage}[t]{0.45\linewidth}
   \includegraphics[width=3.2in]{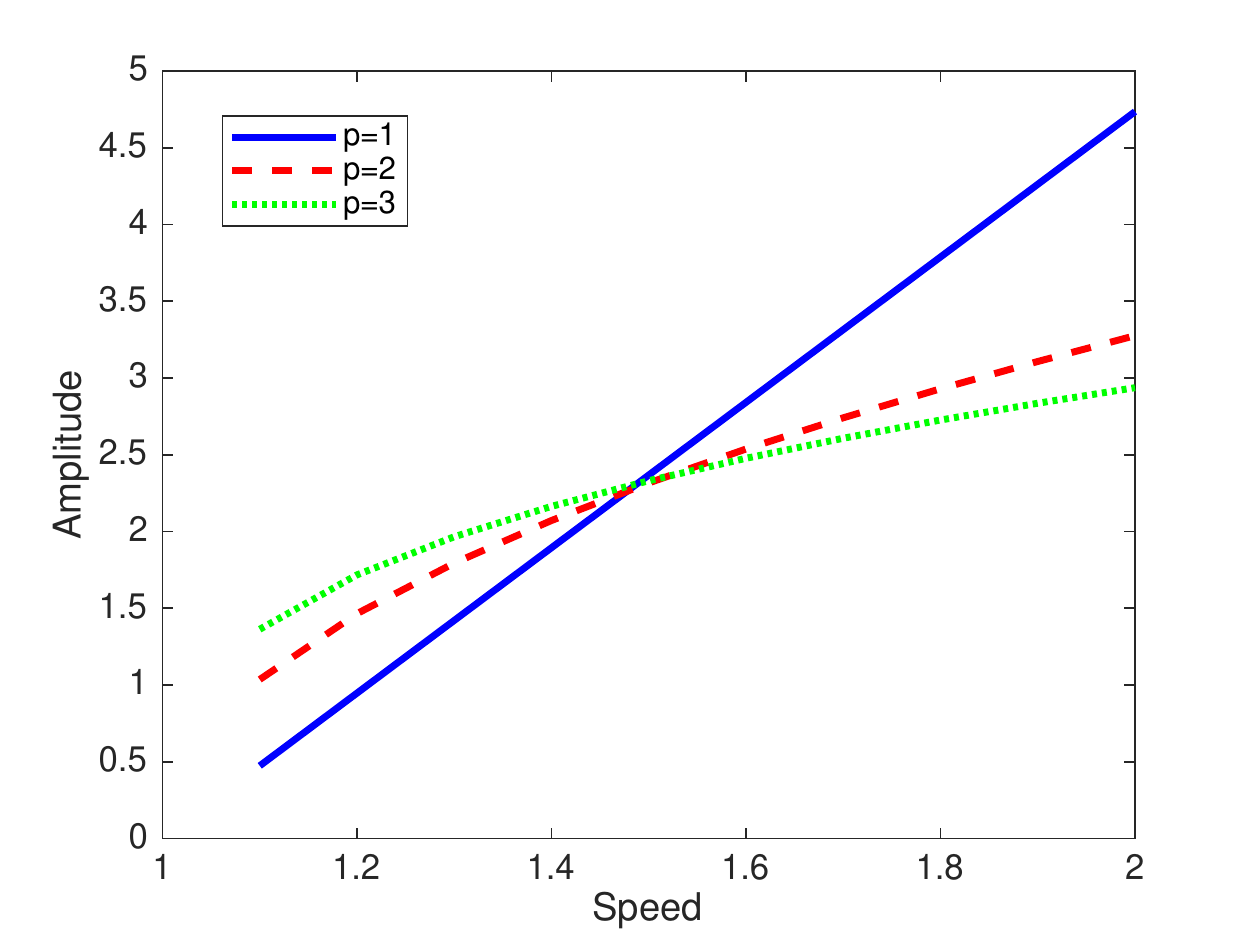}
 \end{minipage}
\hspace{30pt}
\begin{minipage}[t]{0.45\linewidth}
   \includegraphics[width=3.2in]{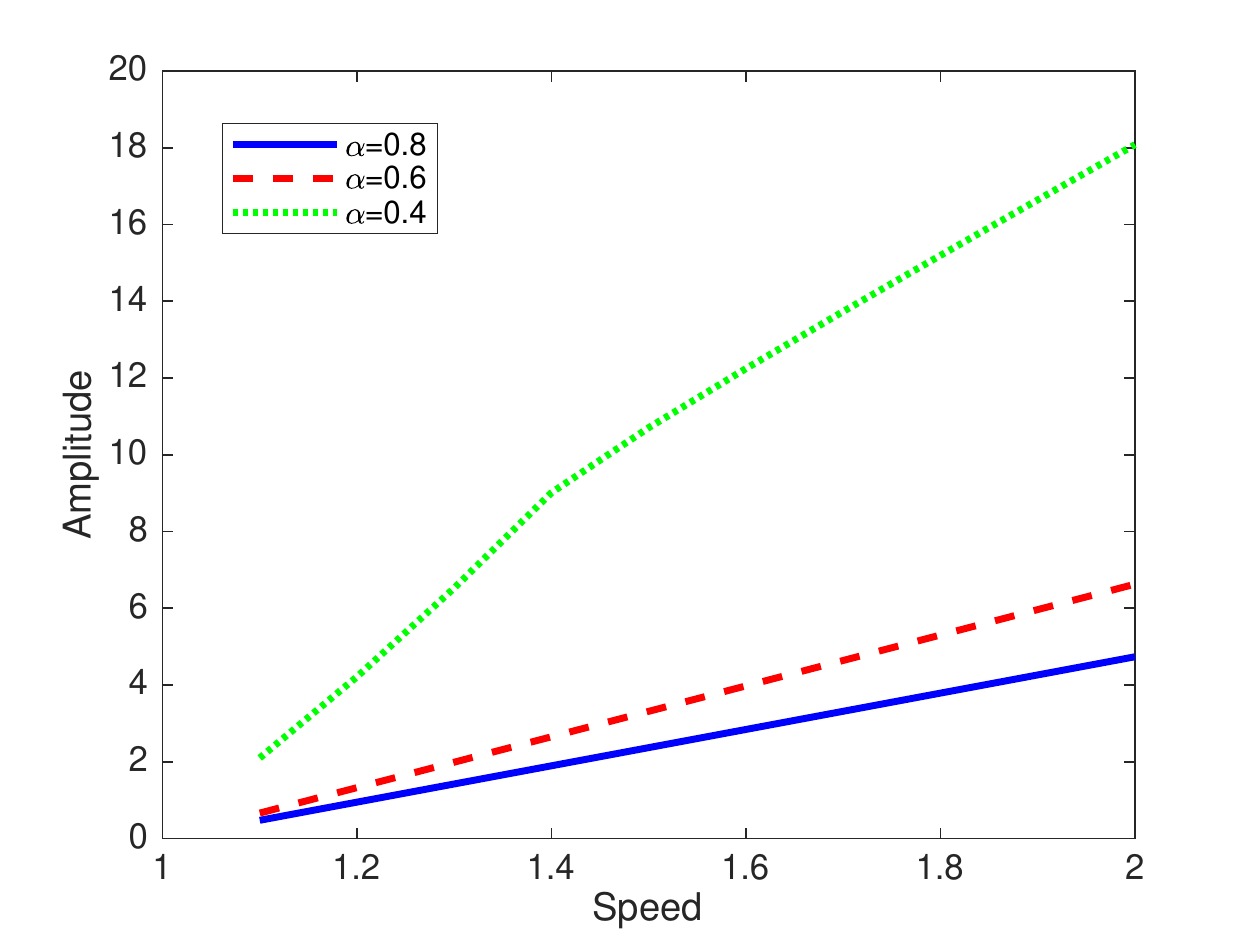}
 \end{minipage}
  \caption{Speed and amplitude relation for various values of $p$ and the fixed $\alpha=0.8$ (left panel) and for various values of $\alpha$ and fixed nonlinearity $p=1$ (right panel)}
  \label{ampspe}
\end{figure}

\subsection{The Fourier-pseudospectral method  for gfBBM Equation}

\par
To investigate the time evolution of the solutions we first show that the numerical scheme captures the exact solution \eqref{tws} for $\alpha=1$  well enough.   We use the initial data \eqref{tws} with $c=1.1$, $\alpha=1$, $p=1$. The problem is solved in the space interval $-2048 \leq x \leq 2048$ up to $T=20$. We set the  number of grid points as $N=2^{18}, M=4000$.
Figure $6$  illustrates variation of change in the conserved quantity  $I_1$ with time and shows that it is  preserved by the numerical scheme. Here we note that as the conserved quantity $I_0$ is linear it is automatically preserved by the numerical scheme \cite{gear}.


\begin{figure}[ht]
\begin{center}
   \includegraphics[width=3in]{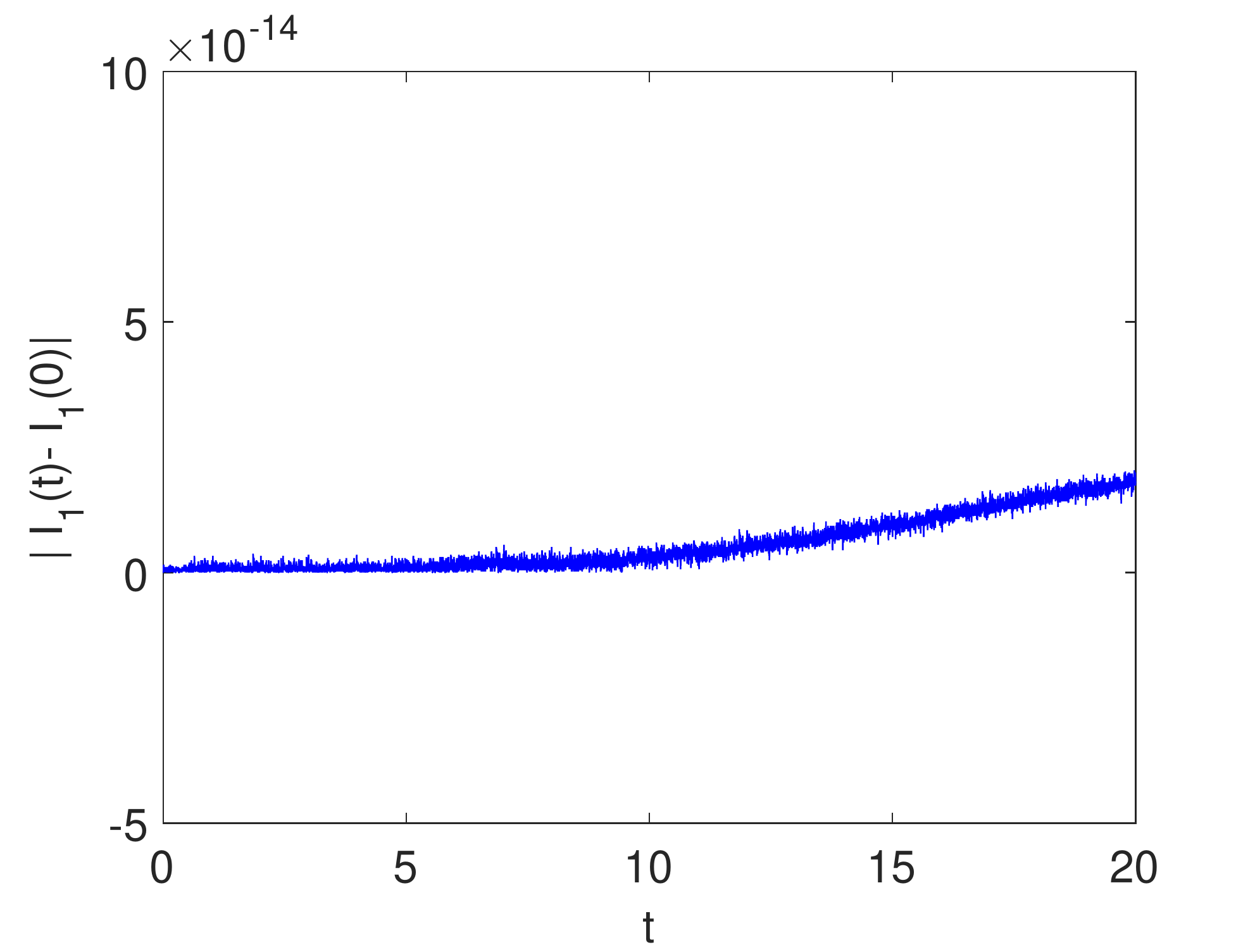}
\end{center}

\vspace*{-20pt}
  \caption{Variation of the change in the   conserved quantity $I_1$ with time}
  \label{name}
\end{figure}

Figure $7$ shows the wave profile  calculated by the Fourier pseudo-spectral method with time step $\Delta t=0.005$ at $t=10$ and $t=20$. To observe the wave profile  more clear, we focus on the space interval
$-150 \leq x \leq 150$.  We also show  the variation of  change  for   the conserved quantity  $I_1$  in  Figure $7$. It is  seen from the figure, the proposed scheme conserves  $I_1$ very well.

\begin{figure}[h!bt]
 \begin{minipage}[t]{0.45\linewidth}
   \includegraphics[width=3in]{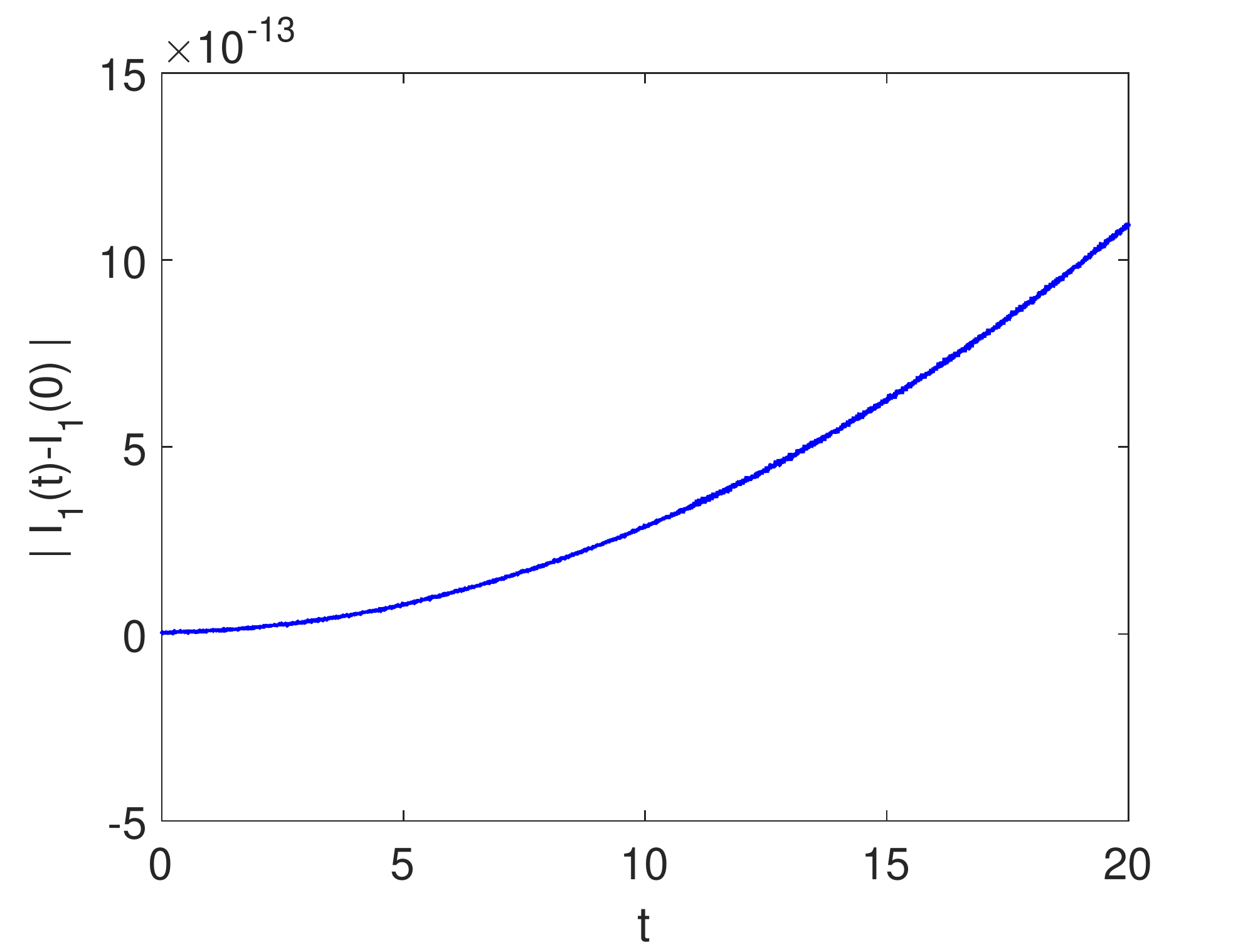}
 \end{minipage}
\hspace{30pt}
\begin{minipage}[t]{0.45\linewidth}
   \includegraphics[width=3in]{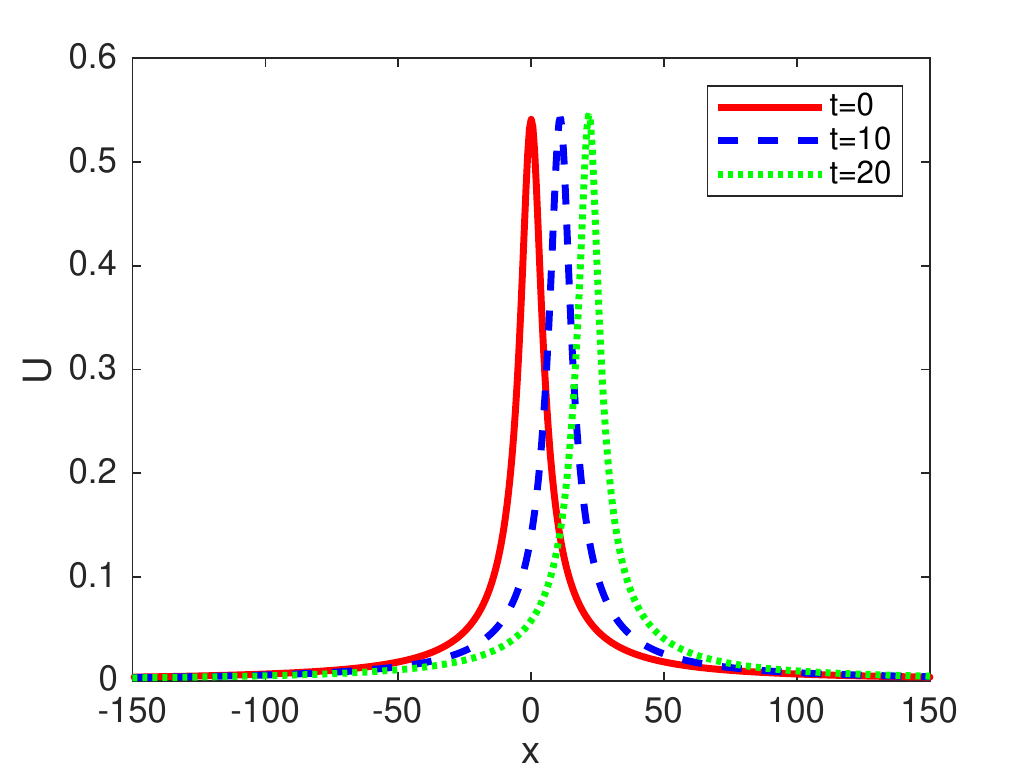}
 \end{minipage}
\caption {The change  in  the conserved quantity  $I_1$  with time (left panel) and time evolution of the wave profile ($c=1.1$, $p=1$, $\alpha=0.6$) at several times (right panel) }
\label{timeevolution}
\end{figure}

\noindent \textbf{Acknowledgements}\\
The authors would like to express  sincere gratitude to the reviewers for their constructive suggestions which helped to improve the quality of this paper.  The last author would like to thank Dr. Jiao He for helpful and fruitful communication. Goksu Oruc was supported by the Scientific and Technological Research Council of Turkey (TUBITAK) under the grant 2211. The first and the last authors were supported by Research Fund of Istanbul Technical University Project Number: 42257.
\bibliographystyle{vancouver}
\bibliography{fgBBM2}

\begin{thebibliography}{10}

\bibitem{erbayelastic}
Erbay HA, Erbay S, Erkip A.
\newblock Derivation of {C}amassa-{H}olm equations for elastic waves.
\newblock Phys Lett A. 2015;379:956--961.

\bibitem{hae}
Erbay HA, Erbay S, Erkip A.
\newblock Derivation of generalized {C}amassa-{H}olm equations from
  {B}oussinesq type equations.
\newblock J Non Math Phys. 2016;23:314--322.

\bibitem{fonseca}
Fonseca G, Linares F, Ponce G.
\newblock The IVP for the dispersion generalized {B}enjamin-{O}no equation in
  weighted {S}obolev spaces.
\newblock Annales de l'Institut Henri Poincare (C) Non Linear Analysis.
  2013;30(5):763--790.

\bibitem{albert}
Albert JP.
\newblock Concentration compactness and the stability of solitary-wave
  solutions to nonlocal equations.
\newblock Contemp Math. 1999;221:1--30.

\bibitem{pava}
Pava JA.
\newblock Stability properties of solitary waves for fractional {K}d{V} and
  {B}{B}{M} equations.
\newblock Nonlinearity. 2018;31(3):920--956.

\bibitem{linares}
Linares F, Pilod D, Saut J.
\newblock Dispersive perturbations of {B}urgers and hyperbolic equations {I}:
  local theory.
\newblock SIAM J Math Anal. 2014;46(2):1505--1537.

\bibitem{linares2015}
Linares F, Pilod D, Saut J.
\newblock Remarks on the orbital stability of ground state solutions of
  f{K}d{V} and related equations.
\newblock Adv Differential Equ. 2015;20(9-10):835--858.

\bibitem{kleinfBBM}
Klein C, Saut J.
\newblock A numerical approach to blow-up issues for dispersive perturbations
  of {B}urgers equation.
\newblock Physica D. 2015;295:46 -- 65.

\bibitem{aduran}
Duran A.
\newblock An efficient method to compute solitary wave solutions of fractional
  {K}orteweg–de {V}ries equations.
\newblock Int J Comp Math. 2018;95(6-7):1362--1374.

\bibitem{franklenzmann}
Frank RL, Lenzmann E.
\newblock Uniqueness of non-linear ground states for fractional {L}aplacians in
  $\mathbb{R}$.
\newblock Acta Math. 2013;210(2):261--318.

\bibitem{he}
He J, Mammeri Y.
\newblock Remark on the well-posedness of weakly dispersive equations.
\newblock ESAIM Proceedings and Surveys. 2018;64:111--120.

\bibitem{runst}
Runst T, Sickel W.
\newblock Sobolev spaces of fractional order, Nemytskij operators, and
  nonlinear partial differential equations. vol.~3.
\newblock Walter de Gruyter; 1996.

\bibitem{kenig}
Kenig CE, Ponce G, Vega L.
\newblock Well-posedness and scattering results for the generalized
  {K}orteweg-de {V}ries equation via the contraction principle.
\newblock Commun Pure Appl Math. 1993;46:527--620.

\bibitem{pelinovsky}
Pelinovski DE, Stepanyants YA.
\newblock Convergence of {P}etviashvili's iteration method for numerical
  approximation of stationary solutions of nonlinear wave equations.
\newblock SIAM J Numer Anal. 2004;42:1110--1127.

\bibitem{petviashvili}
Petviahvili VI.
\newblock Equation of an extraordinary soliton.
\newblock Fiz Plazmy Phys. 1976;2:469--472.

\bibitem{yang}
Yang J.
\newblock Nonlinear waves in integrable and nonintegrable systems.
\newblock SIAM; 2010.

\bibitem{uyen}
Uyen L, Pelinovsky DE.
\newblock Convergence of Petviashvili's Method near Periodic Waves in the
  Fractional Korteweg--de Vries Equation.
\newblock SIAM J Math Anal. 2019;51:2850--2883.

\bibitem{gear}
Gear CW.
\newblock Invariants and numerical methods for ODEs.
\newblock Physica D: Nonlinear Phenomena. 1992;60:303--310.

\end{thebibliography}
\end{document}